\newcommand{\beqn}{\begin{equation}}
\newcommand{\eeqn}{\end{equation}}
\newcommand{\bean}{\begin{eqnarray}}
\newcommand{\eean}{\end{eqnarray}}
\DeclareMathAlphabet{\mathpzc}{OT1}{pzc}{m}{it}
\newtheorem{theorem}{Theorem}[section]
\newtheorem{lemma}[theorem]{Lemma}
\newtheorem{proposition}[theorem]{Proposition}
\numberwithin{equation}{section}
\begin{document}
\title{Classification of extinction profiles for a one-dimensional diffusive Hamilton-Jacobi equation with critical absorption}
\thanks{}
\author{Razvan Gabriel Iagar}
\address{Instituto de Ciencias Matem\'aticas (ICMAT), Nicolas Cabrera 13-15, Campus de Cantoblanco,
E--28049, Madrid, Spain}
\email{razvan.iagar@icmat.es}
\address{Institute of Mathematics of the
Romanian Academy, P.O. Box 1-764, RO-014700, Bucharest, Romania.}

\author{Philippe Lauren\c{c}ot}
\address{Institut de Math\'ematiques de Toulouse, UMR~5219, Universit\'e de Toulouse, CNRS \\ F--31062 Toulouse Cedex 9, France}
\email{laurenco@math.univ-toulouse.fr}

\keywords{self-similar solutions - singular diffusion - diffusive Hamilton-Jacobi equation - decay at infinity - comparison}
\subjclass{35C06 - 34D05 - 35B33 - 35K92 - 35K67}

\date{\today}

\begin{abstract}
A classification of the behavior of the solutions $f(\cdot,a)$ to the ordinary differential equation $(|f'|^{p-2} f')' + f - |f'|^{p-1} = 0$ in $(0,\infty)$ with initial condition $f(0,a)=a$ and $f'(0,a)=0$ is provided, according to the value of the parameter $a>0$, the exponent $p$ ranging in $(1,2)$. There is a threshold value $a_*$ which separates different behaviors of $f(\cdot,a)$: if $a>a_*$ then $f(\cdot,a)$ vanishes at least once in $(0,\infty)$ and takes negative values while $f(\cdot,a)$ is positive in $(0,\infty)$ and decays algebraically to zero as $r\to\infty$ if $a\in (0,a_*)$. At the threshold value, $f(\cdot,a_*)$ is also positive in $(0,\infty)$ but decays exponentially fast to zero as $r\to\infty$. The proof of these results relies on a transformation to a first-order ordinary differential equation and a monotonicity property with respect to $a>0$. This classification is one step in the description of the dynamics  near the extinction time of a diffusive Hamilton-Jacobi equation with critical gradient absorption and fast diffusion.
\end{abstract}

\maketitle

%
%
\pagestyle{myheadings}
\markboth{\sc{R.G. Iagar \& Ph. Lauren\c cot}}{\sc{Classification of extinction profiles}}

\section{Introduction}

Let $p\in (1,2)$. Owing to its scale invariance, the diffusive
Hamilton-Jacobi equation
\begin{equation}
\partial_t u - \partial_x \left( |\partial_x u|^{p-2} \partial_x u \right) +  |\partial_x u|^{p-1} = 0 \ , \qquad (t,x)\in (0,\infty)\times \mathbb{R}\ , \label{i1}
\end{equation}
is expected to have self-similar solutions with separate variables, that is, solutions of the form
\begin{equation}
u_s(t,x) = \left( (2-p) (T-t)_+ \right)^{1/(2-p)} f(|x|)\ , \qquad (t,x)\in (0,\infty)\times \mathbb{R}\ , \label{i2}
\end{equation}
which vanish identically after a finite time $T>0$. Inserting this ansatz in \eqref{i1} leads us to the ordinary differential equation
\begin{equation}
(|f'|^{p-2} f')' + f - |f'|^{p-1} = 0\ , \qquad r\in (0,\infty)\ , \label{i3}
\end{equation}
along with the boundary condition $f'(0)=0$ stemming from the
assumed symmetry and the expected smoothness of $u_s$ with respect
to the space variable. It is then natural to investigate the
behavior of solutions to \eqref{i3} according to the initial value
$f(0)$. The main motivation for such an analysis is that
non-negative self-similar solutions of the form \eqref{i2} are
expected to provide an accurate description of the behavior near the
extinction time of non-negative solutions to \eqref{i1} which enjoy
the finite time extinction property. Indeed, it follows from
\cite[Theorem~1.2]{IaLa12} that there are many non-negative
solutions to \eqref{i1} satisfying the latter property. The
classification of solutions to \eqref{i3} performed below allows us
to identify the behavior at the extinction time of non-negative
solutions to \eqref{i1} in the companion paper \cite{IaLaxx}, the
initial data being even in $\mathbb{R}$, non-increasing on
$(0,\infty)$ and decaying sufficiently rapidly as $x\to\infty$.

More precisely the main result of this paper is the following classification:

\begin{theorem}\label{thm1}
Given $a>0$ there is a unique solution $f(\cdot,a)$ to the initial value problem
\begin{eqnarray}
& & (|f'|^{p-2} f')' + f - |f'|^{p-1} = 0\ , \qquad r\in (0,\infty)\ , \label{i4} \\
& & f(0,a) = a\ , \quad f'(0,a)=0\ , \label{i5}
\end{eqnarray}
and
\begin{equation}
R(a) := \inf\left\{ r>0\ : \ f(r,a) = 0 \right\} \in (0,\infty]\ . \label{i5b}
\end{equation}
Furthermore there is $a_*>0$ with the following properties:
\begin{itemize}
\item[(a)] if $a>a_*$ then $R(a)<\infty$, $f(R(a),a)=0$, and $f'(R(a),a)<0$.
\item[(b)] if $a=a_*$ then $R(a_*)=\infty$ and there is $\ell_*>0$ such that
$$
\lim_{r\to\infty} e^{r/(p-1)} f(r,a_*) = \ell_*\ .
$$
\item[(c)] if $a\in (0,a_*)$ then $R(a)=\infty$ and
$$
\lim_{r\to\infty} r^{(2-p)/(p-1)} f(r,a) = \left( \frac{p-1}{2-p} \right)^{(2-p)/(p-1)}\ .
$$
\end{itemize}
\end{theorem}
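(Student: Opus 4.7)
The plan is to transform the second-order ODE \eqref{i4}--\eqref{i5} into a first-order autonomous planar system, and then to carry out a phase-plane analysis with a shooting argument in $a$. While $f > 0$ and $f' < 0$, the substitution $w := |f'|^{p-1}$ yields
\[
f' = -w^{1/(p-1)}\ , \qquad w' = f - w\ ,
\]
with initial condition $(f,w)|_{r=0}=(a,0)$. Local existence and uniqueness near $r=0$ follow from a standard contraction argument on the integral reformulation, starting from the singular expansion $f'(r) \sim -(ar)^{1/(p-1)}$ as $r \downarrow 0$; continuation in $r$ is then classical as long as $f$ and $-f'$ remain positive. Eliminating the parameter $r$ reduces the system to the scalar first-order ODE
\[
\frac{dw}{df} = \frac{w-f}{w^{1/(p-1)}}\ , \qquad w \big|_{f=a} = 0\ ,
\]
and uniqueness for this scalar equation delivers the key monotonicity: for $0 < a_1 < a_2$ the orbit issued from $(a_2,0)$ lies strictly above the one from $(a_1,0)$ in the $(f,w)$-plane on the interval $\{ f \in (0,a_1) \}$.

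Next I would analyse the phase portrait. Orbits leave the $f$-axis tangentially upward, cross the nullcline $\{ w = f \}$ at the maximum of $w$, and then either reach $\{ f = 0 \}$ transversally in finite $r$ (case~(a), with $f'(R(a),a) < 0$ because $w > 0$ there), or converge to the equilibrium $(0,0)$ as $r \to \infty$. The linearisation at $(0,0)$ has eigenvalues $0$ and $-1$ in the directions $(1,1)$ and $(0,1)$; a center-manifold reduction $w = f + \psi(f)$ with $\psi(f) \sim f^{1/(p-1)}$ leads to the reduced equation $f' = -f^{1/(p-1)}(1+o(1))$, whose explicit integration produces the algebraic decay of case~(c) together with the exact limit constant, while approach along the one-dimensional stable manifold (tangent to the $w$-axis) gives the exponential rate $1/(p-1)$ and a positive limit $\ell_*$, i.e.\ case~(b). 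Setting
\[
\mathcal{A} := \{ a > 0 \ :\ R(a) < \infty \}\ , \qquad \mathcal{C} := \{ a > 0 \ :\ R(a) = \infty \text{ and } f(\cdot,a) \text{ decays algebraically} \}\ ,
\]
the monotonicity makes $\mathcal{A}$ upward-closed and $\mathcal{C}$ downward-closed, and continuous dependence combined with the transversality at $\{ f = 0 \}$ and the stability of the center-manifold regime makes both sets open. Non-emptiness of $\mathcal{A}$ follows for large $a$ because the orbit then enters $\{ w > f \}$ quickly and carries enough $w$-momentum to drive $f$ below zero, while non-emptiness of $\mathcal{C}$ follows for small $a$ by comparison with the explicit translated algebraic profile as a barrier.

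Setting $a_* := \inf \mathcal{A}$ then gives a threshold in $(0,\infty)$ lying in neither $\mathcal{A}$ nor $\mathcal{C}$ (by openness), so $f(\cdot, a_*)$ converges to $(0,0)$ but not along the center manifold; it is therefore forced onto the stable manifold, yielding case~(b). The main obstacle will be the sharp asymptotic identification in case~(c): promoting the algebraic decay into the precise limit of $r^{(2-p)/(p-1)} f(r,a)$ stated in the theorem requires careful control of the error term $\varepsilon(r)$ in the reduced equation $f' = -f^{1/(p-1)}(1+\varepsilon(r))$. I would obtain this by a Gr\"onwall-type estimate on the logarithm of an appropriate ratio, possibly supplemented by comparison with explicit sub- and super-solutions that asymptotically approach the candidate profile; the monotonicity in $a$ will then rule out exponential decay for $a < a_*$ and yield the exact limit constant. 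The companion technical point is checking that $\ell_* > 0$ strictly in case~(b), which follows from the one-dimensional structure of the stable manifold and the non-degeneracy of the orbit issued from $(a_*, 0)$.
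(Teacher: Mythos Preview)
Your strategy is sound and leads to a complete proof, but it is genuinely different from the route the paper takes. The paper also reduces to a first-order problem, but via the normalized variable $y=1-f/a\in(0,1)$ and the unknown $\psi(y,a)=|f'|^p/a^p$, which satisfies
\[
\psi'(y)+\tfrac{p}{p-1}\psi(y)^{(p-1)/p}=\tfrac{p}{p-1}a^{2-p}(1-y),\qquad \psi(0)=0.
\]
Because $a$ enters only through the right-hand side, a comparison principle immediately yields the monotonicity and continuous dependence in $a$, and the trichotomy is read off from the behaviour of $\psi(y,a)/(1-y)^p$ as $y\to1$. The delicate steps, namely that $\mathcal{B}$ is a singleton and the sharp constant in case~(c), are handled by explicit barrier functions and a differential inequality for the difference $\varphi(\cdot,a^*)-\varphi(\cdot,a_*)$, with no appeal to invariant-manifold theory. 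Your phase-plane picture is more conceptual and explains \emph{why} the dichotomy occurs (stable manifold versus center manifold at the degenerate equilibrium), whereas the paper's approach is more hands-on and avoids any smoothness issues at the origin.

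Two points in your outline deserve care. First, for $p\in(3/2,2)$ the vector field $(f,w)\mapsto(-w^{1/(p-1)},f-w)$ is only $C^1$ at the origin, so the standard center-manifold theorem does not yield enough regularity of the reduced flow to read off the precise constant in~(c); you will in effect have to carry out the barrier construction you allude to, which is exactly what the paper does in Lemma~\ref{lemc7}. Second, your uniqueness of $a_*$ rests on the claim that every orbit in $\mathcal{B}$ lies on the (unique) one-dimensional stable manifold; this is true, but you should justify that exponential decay of $f$ at rate $e^{-r/(p-1)}$ forces the orbit onto $W^s$ rather than merely being asymptotic to it, after which strict ordering of the orbits finishes the argument. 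The paper sidesteps this by proving directly (Proposition~\ref{propc6}) that two distinct elements of $\mathcal{B}$ would force $\varphi(\cdot,a^*)-\varphi(\cdot,a_*)\to\infty$ as $y\to1$, contradicting the common limit $\kappa$.
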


Before giving a rough account of the proof of Theorem~\ref{thm1},
let us complete the discussion started before the statement of
Theorem~\ref{thm1} on the role of self-similar solutions to
\eqref{i1} of the form \eqref{i2} in the description of the dynamics
of non-negative solutions to \eqref{i1} near their extinction time.
According to Theorem~\ref{thm1} we have infinitely many non-negative
self-similar solutions of the form \eqref{i2} (corresponding to
$a\in (0,a_*]$), but it turns out that \emph{only one is selected}
by the dynamics of \eqref{i1} as the behavior near the extinction
time. More precisely, as shown in \cite{IaLaxx}, if $u$ is a
solution to \eqref{i1} emanating from a non-negative even initial
condition which is non-increasing on $(0,\infty)$ and decays
sufficiently rapidly as $x\to\infty$ and if $T_e$ denotes its
extinction time, then $u(t,x)$ behaves as
$((2-p)(T_e-t)_+)^{1/(2-p)} f(|x|,a_*)$ as $t\to T_e$. Let us point
out that this \textsl{universal} behavior is also true in higher
space dimensions $N\ge 2$ for $p\in (2N/(N+1),2)$, but the
identification of the corresponding self-similar profile is more
involved and requires completely different arguments \cite{IaLaxx}.
We also point out that a similar dynamics as the one described above
is observed for the fast diffusion equation
$$
\partial_t v - \Delta v^m + v^m = 0\ , \qquad (t,x)\in (0,\infty)\times \mathbb{R}^N\ ,
$$
when $m\in ((N-2)_+/N,1)$ \cite{FeVa01, dPSa02}.

Let us now describe more precisely the proof of Theorem~\ref{thm1}.
Given $a>0$, classical results guarantee the well-posedness of
\eqref{i4}-\eqref{i5}, see Section~\ref{s2}. In addition, there is
$R(a)\in (0,\infty]$ such that $f(\cdot,a)$ is a decreasing
one-to-one function from $[0,R(a))$ onto $(0,a]$. This property
allows us to introduce $\psi(\cdot,a)$ defined on $(0,1)$ by
\begin{equation}
\psi\left( 1 - \frac{f(r,a)}{a} , a\right) := \frac{|f'(r,a)|^p}{a^p}\ , \qquad r\in [0,R(a))\ . \label{i6}
\end{equation}
Thanks to \eqref{i4}-\eqref{i5}, the function $\psi(\cdot,a)$ solves
\begin{equation}
\begin{split}
& \psi'(y) + \frac{p}{p-1} \psi(y)^{(p-1)/p}(y) = \frac{p}{p-1} a^{2-p} (1-y)\ , \qquad y\in (0,1)\ , \\
& \psi(0) = 0\ .
\end{split} \label{i7}
\end{equation}
The transformation \eqref{i6} thus reduces the second-order
differential equation \eqref{i4} to the first-order differential
equation \eqref{i7}, which is already a valuable feature, but it
also has the very interesting property that $\psi(\cdot,a)$ is
monotone with respect to $a$. The latter is in particular of utmost
importance to investigate uniqueness issues, see 
\cite{CQW03, Shi04, IaLa13a, IaLa13b, YeYi15} for instance, where monotonicity with respect to the shooting parameter is used to establish uniqueness of the ``fast orbit'' for related problems. In addition, the
finiteness of $R(a)$ as well as the behavior of $f(r,a)$ as
$r\to\infty$ when $R(a)=\infty$ are directly connected to the
behavior of $\psi(y,a)$ as $y\to 1$. The core of the analysis is
actually the identification of the behavior of $\psi(y,a)$ as $y\to
1$ according to the value of $a$ and is performed in
Section~\ref{s3}. Interpreting the results obtained in
Section~\ref{s3} in terms of $f(\cdot,a)$ is done in
Section~\ref{s4}, where we prove Theorem~\ref{thm1}.

We end this introduction with a couple of remarks: on the one hand,
the approach developed in this paper does not seem to extend to the
study and classification of self-similar solutions to \eqref{i1} of
the form \eqref{i2} in several space dimensions, the main reason
being that the variable $r=|x|$ remains in the equation satisfied by
$\psi$. Indeed, it seems that no transformation similar to
\eqref{i6} is available in dimension $N\geq2$. Still, it is possible
to establish a result similar to Theorem~\ref{thm1} in higher space
dimensions but completely different arguments are used
\cite{IaLaxx}. On the other hand, there is a \emph{striking
difference} between \eqref{i3} and
\begin{equation}
(|f'|^{p-2} f')' + f - |f|^{p-2}f = 0\ , \qquad r\in (0,\infty)\ ,
\label{i8}
\end{equation}
which involves only zero order reaction terms. Indeed, in general,
\eqref{i8} and its generalizations have only one non-negative
$C^1$-smooth solution which is defined on $(0,\infty)$ and converges
to zero as $r\to\infty$, the so-called \textsl{ground state
solution}, see \cite{Kw89, SeTa00, ShWa16, Ya91b} and the references
therein. This is in sharp contrast with \eqref{i3} for which
infinitely many ground states exist, see Theorem~\ref{thm1}, but a
single one features a faster decay as $r\to\infty$. This
multiplicity of course complicates the analysis, as it requires not
only to identify the possible decay rates as $r\to\infty$, but also
the corresponding ranges of the parameter $a$.

\section{Well-posedness of \eqref{i4}-\eqref{i5}}\label{s2}

We begin with the well-posedness of \eqref{i4}-\eqref{i5} and basic properties of its solutions.

\begin{lemma}\label{lemb1}
Given $a>0$, there is a unique solution $f(\cdot,a)\in
C^1([0,\infty))$ to \eqref{i4}-\eqref{i5} such that $|f'|^{p-2}f'\in
C^1([0,\infty))$. Furthermore,
$$
R(a) = \inf\left\{ r>0\ : \ f(r,a) = 0 \right\} \in (0,\infty]
$$
and $f(\cdot,a)$ enjoys the following properties:
\begin{equation}
0 < f(r,a)<a \;\text{ and }\; - \left( a (1-e^{-r}) \right)^{1/(p-1)} < f'(r,a) < 0\ , \qquad r\in (0,R(a))\ , \label{b1}
\end{equation}
and
\begin{equation}
\frac{d}{dr} \left( e^r |f'(r,a)|^{p-2} f'(r,a) \right) = - e^r f(r,a)\ , \qquad r\in (0,R(a))\ . \label{b1b}
\end{equation}
\end{lemma}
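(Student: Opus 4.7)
The plan is to bypass the singularity of the second-order equation \eqref{i4} at $f'=0$ (which is forced at $r=0$ by \eqref{i5}) by reformulating \eqref{i4}-\eqref{i5} as a fixed-point problem. The key algebraic observation is that, whenever $f'<0$, the identity $|f'|^{p-2}f'=-|f'|^{p-1}$ makes the equation \eqref{i4} exact after multiplication by $e^r$, producing \eqref{b1b}. Integrating \eqref{b1b} from $0$ to $r$ and using $f'(0)=0$ yields
$$
|f'(r)|^{p-1} = e^{-r}\int_0^r e^s f(s)\, ds,
$$
and hence the integrated form
$$
f(r) = \mathcal{T}[f](r) := a - \int_0^r \left( e^{-s} \int_0^s e^\sigma f(\sigma)\, d\sigma \right)^{1/(p-1)}\, ds.
$$
Finding a unique solution of \eqref{i4}-\eqref{i5} near $r=0$ with $f>0$ and $f'<0$ is thus equivalent to finding a unique positive fixed point of $\mathcal{T}$.

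For the local step I would apply Banach's contraction principle to $\mathcal{T}$ on the closed set $\{\varphi\in C([0,\delta]): \|\varphi-a\|_\infty \le a/2\}$ for sufficiently small $\delta>0$. This succeeds because $z\mapsto z^{1/(p-1)}$ has derivative $z^{(2-p)/(p-1)}/(p-1)$, which is bounded on every $[0,M]$ (the exponent $(2-p)/(p-1)$ being positive thanks to $p<2$); combined with the elementary estimate $\left|e^{-s}\int_0^s e^\sigma(\varphi_1-\varphi_2)(\sigma)\, d\sigma\right|\le s\|\varphi_1-\varphi_2\|_\infty$, this yields a contraction constant of order $\delta^{1+1/(p-1)}$ and a unique local solution, which moreover satisfies $f\in C^1$ and $|f'|^{p-2}f'\in C^1$ by inspection of the explicit formulas. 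To extend the solution beyond $[0,\delta]$ I would use the system formulation with $g:=|f'|^{p-2}f'$, namely $g'=-f+|g|$ and $f'=-|g|^{1/(p-1)}$, which is locally Lipschitz wherever $g<0$. The vanishing of $f'$ at some $r_0>0$ before $f$ does is excluded by the integrated identity above: it would force $\int_0^{r_0}e^sf(s)\,ds=0$, impossible when $f>0$ on $(0,r_0)$. This defines $R(a):=\inf\{r>0:f(r,a)=0\}\in(0,\infty]$, and \eqref{b1b} holds throughout $(0,R(a))$; substituting the strict inequality $f(s,a)<a$ (itself a consequence of $f'<0$) into the integrated identity produces $|f'(r,a)|^{p-1}<a(1-e^{-r})$, which is the remaining bound in \eqref{b1}. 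If $R(a)<\infty$ then $f'(R(a),a)<0$ by the same identity, so the Cauchy--Lipschitz theorem extends the solution past $R(a)$, and standard a priori estimates prolong it to all of $[0,\infty)$.

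The main obstacle is the local existence step: no direct Picard iteration on \eqref{i4} is available because $|f'|^{p-2}$ blows up at $r=0$. The gain in the plan above comes from exploiting the cancellation between diffusion and absorption enjoyed by monotone decreasing solutions, which converts the singular second-order problem into a contraction whose nonlinearity $z\mapsto z^{1/(p-1)}$ is Lipschitz at $z=0$ precisely because $1/(p-1)>1$ when $p\in(1,2)$. Once past this step, the qualitative bounds in \eqref{b1} and the identity \eqref{b1b} are immediate consequences of the integrated form.
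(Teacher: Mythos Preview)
Your argument is correct, but it takes a longer route than the paper does. The paper observes that the first-order system
\[
f' = -|g|^{(2-p)/(p-1)} g\ , \qquad g' = -|g| + f\ , \qquad (f,g)(0)=(a,0)\ ,
\]
has a right-hand side that is $C^1$ in $(f,g)$ \emph{globally}, including at $g=0$, because $(2-p)/(p-1)>0$ when $p\in(1,2)$. Cauchy--Lipschitz therefore gives local existence and uniqueness outright, and a simple Gr\"onwall-type energy estimate on $\frac{p-1}{p}|g|^{p/(p-1)}+\frac{1}{2}f^2$ rules out finite-time blow-up, yielding global existence. In other words, the ``main obstacle'' you identify---the blow-up of $|f'|^{p-2}$ at $f'=0$---is illusory once the equation is written as a system in $(f,g)$ with $g=-|f'|^{p-2}f'$: the exponent that actually appears is $1+(2-p)/(p-1)=1/(p-1)>1$, which is exactly the Lipschitz exponent you exploit in your fixed-point step. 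Your contraction argument on $\mathcal{T}$ is therefore reproving, by hand, what Cauchy--Lipschitz already supplies.

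Two small points where your version loses ground. First, your system $f'=-|g|^{1/(p-1)}$ is only equivalent to \eqref{i4} on the set $\{g\le 0\}$, whereas the paper's odd version $f'=-|g|^{(2-p)/(p-1)}g$ is valid for all $g$; this matters for the continuation past $R(a)$, which you defer to ``standard a priori estimates'' without saying which. The paper's explicit energy inequality closes this gap with one line. Second, the paper's route gives uniqueness in the full class $f\in C^1$, $|f'|^{p-2}f'\in C^1$ directly, whereas your fixed point yields uniqueness only among functions close to $a$; you then need the observation that \emph{any} such solution has $g'(0)=-a<0$, hence $f'<0$ near $0$, to match the two notions.
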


\begin{proof}
Since $p\in (1,2)$, the Cauchy-Lipschitz theorem ensures the
existence and uniqueness of a solution $(f,g)\in
C^1([0,\mathcal{R}(a));\mathbb{R}^2)$ to the initial value problem
\begin{equation}
\begin{split}
& f'(r) = - |g(r)|^{(2-p)/(p-1)} g(r)\ , \qquad g'(r) = - |g(r)| + f(r)\ , \qquad r\in (0,\mathcal{R}(a))\ , \\
& f(0)=a\ , \ g(0) = 0\ ,
\end{split}\label{b2}
\end{equation}
where $\mathcal{R}(a)\in (0,\infty]$ is such that either
$\mathcal{R}(a)=\infty$ or
\begin{equation}
\mathcal{R}(a)<\infty \;\text{ and }\; \limsup_{r\to\mathcal{R}(a)} \left( |f(r)| + |g(r)| \right) = \infty\ . \label{b3}
\end{equation}
Since $g(r) = -|f'(r)|^{p-2} f'(r)$ by \eqref{b2} for $r\in
[0,\mathcal{R}(a))$, it readily follows from \eqref{b2} that $f$
solves \eqref{i4}-\eqref{i5}. A further consequence of \eqref{b2} is
that
\begin{align*}
\frac{d}{dr} \left[ \frac{p-1}{p} |g|^{p/(p-1)} + \frac{1}{2} f^2 \right] & = |g|^{(2-p)/(p-1)} g (f-|g|) - |g|^{(2-p)/(p-1)} g f \\
& = - |g|^{1/(p-1)} g \\
& \le \frac{p}{p-1} \left[ \frac{p-1}{p} |g|^{p/(p-1)} + \frac{1}{2} f^2 \right]\ ,
\end{align*}
which excludes the occurrence of \eqref{b3}. Therefore $\mathcal{R}(a)=\infty$ and the positivity of $a$ along with the continuity of $f$ guarantee that $R(a)>0$.

We next infer from \eqref{i4}-\eqref{i5} that
$$
\lim_{r\to 0} (|f'|^{p-2} f')'(r) = -a < 0\ ,
$$
which implies that $f'$ is negative in a right neighborhood of $r=0$ as $f'(0)=0$. Using again \eqref{i4} we note that
\begin{equation}
\frac{d}{dr} \left( e^r |f'(r)|^{p-2} f'(r) \right) =  e^r \left[ |f'(r)|^{p-2} f'(r) + |f'(r)|^{p-1} - f(r) \right]\ , \qquad r>0\ . \label{b4}
\end{equation}
Consequently, as long as $f'(r)$ is negative and $r\in (0,R(a))$, there holds
$$
\frac{d}{dr} \left( e^r |f'(r)|^{p-2} f'(r) \right) =  - e^r f(r) < 0\ ,
$$
from which we deduce that $f'$ cannot vanish in $(0,R(a))$. We have thus proved that $f'(r)<0$ and $f(r)\in (0,a)$ for $r\in (0,R(a))$ as well as \eqref{b1b}. Combining these properties gives
$$
- \frac{d}{dr} \left( e^r |f'(r)|^{p-1} \right) \ge - a e^r\ , \qquad r\in (0,R(a))\ ,
$$
hence, after integration and using \eqref{i5},
$$
- e^r |f'(r)|^{p-1} \ge - a (e^r-1)\ , \qquad r\in (0,R(a))\ .
$$
This completes the proof of Lemma~\ref{lemb1}.
\end{proof}

\section{An alternative formulation}\label{s3}

Let $a>0$ and set $f=f(\cdot,a)$. As $f'<0$ in $(0,R(a))$ by
\eqref{b1}, the function $a-f$ is an increasing one-to-one function
from $[0,R(a))$ onto $[0,a)$ and we denote its inverse by $F$. Then
$F$ is an increasing function from $[0,a)$ onto $[0,R(a))$ and we
may define
\begin{equation}
\psi(y) = \psi(y,a) := \frac{1}{a^p} |f'(F(ay))|^p\ , \qquad y\in [0,1)\ . \label{c1}
\end{equation}
Equivalently,
\begin{equation}
\psi\left( 1 - \frac{f(r)}{a} \right) = \frac{|f'(r)|^p}{a^p}\ ,
\qquad r\in [0,R(a))\ , \label{c2}
\end{equation}
and
\begin{equation}
\psi'\left( 1 - \frac{f(r)}{a} \right) = - \frac{p}{(p-1) a^{p-1}}
\left( |f'|^{p-2} f' \right)'(r)\ , \qquad r\in [0,R(a))\ .
\label{c3}
\end{equation}
We then infer from \eqref{i4}-\eqref{i5}, \eqref{c2}, and \eqref{c3} that $\psi$ solves
\begin{eqnarray}
& & \psi'(y) + \frac{p}{p-1} \psi(y)^{(p-1)/p} = \frac{p a^{2-p}}{p-1} (1-y)\ , \qquad y\in (0,1)\ , \label{c4} \\
& & \psi(0) = 0\ . \label{c5}
\end{eqnarray}
We also deduce from \eqref{c4}-\eqref{c5} that
\begin{equation}
\psi'(0) = \frac{p a^{2-p}}{p-1}>0\ . \label{c6}
\end{equation}

\subsection{Comparison and monotonicity}\label{s3.1}

Though the equation \eqref{c4} involves the exponent $(p-1)/p$, which ranges in $(0,1)$, the following comparison principle is available:

\begin{lemma}[Comparison principle]\label{lemc0}
Let $\xi_i\in C^1([0,1))$, $i=1,2$, be two functions satisfying $\xi_1(0) \le \xi_2(0)$ and
\begin{equation}
\xi_1'(y) + \frac{p}{p-1} \xi_1(y)^{(p-1)/p} \le \xi_2'(y) + \frac{p}{p-1} \xi_2(y)^{(p-1)/p}\ , \qquad y\in (0,1)\ . \label{c0}
\end{equation}
Then $\xi_1(y) \le \xi_2(y)$ for $y\in [0,1)$.
\end{lemma}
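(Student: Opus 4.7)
The plan is to argue by contradiction and exploit the fact that, although the nonlinearity $h(s) := \frac{p}{p-1} s^{(p-1)/p}$ is only H\"older continuous at $s=0$, it is strictly increasing on $[0,\infty)$. This monotonicity is what replaces Lipschitz continuity in the usual comparison argument; no uniqueness for the Cauchy problem is needed, only comparison.

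Suppose, for contradiction, that the conclusion fails, so there exists $y_{0}\in(0,1)$ with $\xi_{1}(y_{0})>\xi_{2}(y_{0})$. Since $\xi_{1}(0)\le \xi_{2}(0)$ and $\xi_{1}-\xi_{2}$ is continuous, the number
$$
y_{1} := \sup\{y\in[0,y_{0}] : \xi_{1}(y)\le \xi_{2}(y)\}
$$
lies in $[0,y_{0})$, satisfies $\xi_{1}(y_{1})=\xi_{2}(y_{1})$, and $\xi_{1}(y)>\xi_{2}(y)$ for every $y\in(y_{1},y_{0}]$. Since both $\xi_{1}$ and $\xi_{2}$ are nonnegative on the relevant range (this is the tacit setting, consistent with $\xi\mapsto \xi^{(p-1)/p}$ only being defined for $\xi\ge 0$), strict monotonicity of $s\mapsto s^{(p-1)/p}$ yields $\xi_{1}(y)^{(p-1)/p}>\xi_{2}(y)^{(p-1)/p}$ on $(y_{1},y_{0}]$. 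Rearranging \eqref{c0} gives
$$
\xi_{1}'(y) - \xi_{2}'(y) \le \frac{p}{p-1}\left( \xi_{2}(y)^{(p-1)/p} - \xi_{1}(y)^{(p-1)/p} \right) < 0\ , \qquad y\in(y_{1},y_{0}]\ .
$$

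Integrating this inequality between $y_{1}$ and $y_{0}$ and using $\xi_{1}(y_{1})=\xi_{2}(y_{1})$ yields $\xi_{1}(y_{0})-\xi_{2}(y_{0})<0$, which contradicts the choice of $y_{0}$. Hence $\xi_{1}\le \xi_{2}$ throughout $[0,1)$. The only genuinely delicate point is that one cannot invoke standard Lipschitz-based comparison, since $s\mapsto s^{(p-1)/p}$ fails to be Lipschitz at the origin; this is circumvented by working on the open set where $\xi_{1}>\xi_{2}$ (and hence away from potential branching), where monotonicity alone suffices.
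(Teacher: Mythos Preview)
Your proof is correct and rests on the same idea as the paper's---the strict monotonicity of $s\mapsto s^{(p-1)/p}$ on $[0,\infty)$---but the execution differs. The paper introduces an auxiliary parameter $\delta>0$, considers the first point $y_\delta$ where $\xi_1=\xi_2+\delta$, and obtains a pointwise contradiction from the sign of $(\xi_1'-\xi_2')(y_\delta)$; letting $\delta\to 0$ then yields the conclusion. You instead locate the last point $y_1\in[0,y_0)$ with $\xi_1(y_1)=\xi_2(y_1)$ and integrate the resulting strict differential inequality over $(y_1,y_0)$. Your route is marginally more direct (no limit in $\delta$), at the cost of implicitly invoking that the integral of a continuous, strictly negative function over an interval of positive length is strictly negative; the paper's $\delta$-shift avoids any integration and delivers the contradiction at a single point, which is the more customary phrasing in comparison-principle arguments. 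Both approaches sidestep the failure of Lipschitz continuity of $s\mapsto s^{(p-1)/p}$ at the origin in the same way, by working only where $\xi_1>\xi_2$ (respectively $\xi_1>\xi_2+\delta$).
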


\begin{proof}
Lemma~\ref{lemc0} actually follows from the monotonicity of $z\mapsto z^{(p-1)/p}$ and we recall its proof for the sake of completeness. Let $\delta>0$ and define
$$
y_\delta := \inf\{ y\in [0,1)\ :\ \xi_1(y)=\xi_2(y)+\delta\}\ .
$$
Clearly $y_\delta>0$ since $\xi_1(0)-\xi_2(0)-\delta \le - \delta <
0$. Assume for contradiction that $y_\delta<1$. Then
$\xi_1-\xi_2-\delta<0$ in $[0,y_\delta)$ and $(\xi_1' -
\xi_2')(y_\delta)\ge 0$, while \eqref{c0} gives
\begin{align*}
(\xi_1'-\xi_2')(y_\delta) & \le \frac{p}{p-1} \xi_2(y_\delta)^{(p-1)/p} - \frac{p}{p-1} \xi_1(y_\delta)^{(p-1)/p} \\
& = \frac{p}{p-1} \xi_2(y_\delta)^{(p-1)/p} - \frac{p}{p-1} \left( \xi_2(y_\delta) + \delta \right)^{(p-1)/p} < 0\ ,
\end{align*}
and a contradiction. Consequently, $\xi_1\le \xi_2+\delta$ in
$[0,1)$ and, since this inequality is valid for any $\delta>0$, we
conclude that $\xi_1\le \xi_2$ in $[0,1)$.
\end{proof}

The transformation \eqref{c1} has thus reduced the second-order
equation \eqref{i4} to the first-order equation \eqref{c4}, which
lowers the complexity of the problem. An additional property, which
turns out to be of high interest as well, of solutions to
\eqref{c4}-\eqref{c5} is their monotonicity with respect to $a$,
which is obviously a simple consequence of the comparison principle
established in Lemma~\ref{lemc0}. A more precise result is actually
available.

\begin{lemma}[Monotonicity with respect to $a$]\label{lemc1}
Consider $0<a_1<a_2$. Then there exists $K(p)>0$ depending only on
$p$ such that, for $y\in [0,1)$,
\begin{eqnarray*}
\psi(y,a_1) \le \psi(y,a_2) & \le & \psi(y,a_1) + K(p) (a_2-a_1)^{2-p}\ , \\
|\psi'(y,a_1) - \psi'(y,a_2)| & \le & K(p) \left[ (a_2-a_1)^{(2-p)(p-1)/p} + (a_2-a_1)^{2-p} \right]\ .
\end{eqnarray*}
In addition, $\psi(y,a_1)<\psi(y,a_2)$ for any $y\in(0,1)$.
\end{lemma}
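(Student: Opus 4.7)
The plan is to derive both inequalities by studying the differential equation satisfied by the difference $w(y) := \psi(y, a_2) - \psi(y, a_1)$. Comparing the right-hand sides of \eqref{c4} written for $a_1$ and $a_2$ and invoking Lemma~\ref{lemc0} immediately gives $\psi(y, a_1) \le \psi(y, a_2)$ on $[0, 1)$. Subtracting the two equations, $w$ satisfies $w(0) = 0$ and
\[
w'(y) = \frac{p (a_2^{2-p} - a_1^{2-p})}{p-1}(1-y) - \frac{p}{p-1}\Bigl[\psi(y, a_2)^{(p-1)/p} - \psi(y, a_1)^{(p-1)/p}\Bigr],
\]
and, since the bracketed term is non-negative and $a_2^{2-p} - a_1^{2-p} \le (a_2 - a_1)^{2-p}$ (by the concavity of $z \mapsto z^{2-p}$ vanishing at zero, recalling that $2-p \in (0,1)$), I obtain the pointwise bound $w'(y) \le \tfrac{p}{p-1}(a_2 - a_1)^{2-p}(1-y)$. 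Integrating over $[0, y]$ yields the first estimate with $K(p) = p/[2(p-1)]$.

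For the derivative estimate I return to the formula above, take absolute values, and bound the first term by $\tfrac{p}{p-1}(a_2 - a_1)^{2-p}$ as before. The sub-additivity of the concave map $z \mapsto z^{(p-1)/p}$ then gives
\[
0 \le \psi(y, a_2)^{(p-1)/p} - \psi(y, a_1)^{(p-1)/p} \le w(y)^{(p-1)/p},
\]
and inserting the upper bound on $w$ from the first step produces the stated mixed-exponent control, after suitably enlarging $K(p)$.

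For the strict monotonicity I plan to argue by contradiction. From \eqref{c6} one reads $w'(0) = \tfrac{p}{p-1}(a_2^{2-p} - a_1^{2-p}) > 0$, so $w$ is positive on a right neighbourhood of $0$. Let $y_0 := \sup\{y \in (0, 1) : w > 0 \text{ on } (0, y)\}$; if $y_0 < 1$, then $w(y_0) = 0$ by continuity while $w > 0$ on $(0, y_0)$ forces $w'(y_0) \le 0$. But the bracket in the expression for $w'$ vanishes at $y_0$ (since $\psi(y_0, a_1) = \psi(y_0, a_2)$), leaving $w'(y_0) = \tfrac{p}{p-1}(a_2^{2-p} - a_1^{2-p})(1 - y_0) > 0$, a contradiction.

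The key technical ingredient throughout is the sub-additivity of concave power maps vanishing at the origin, which I exploit twice: once on $a \mapsto a^{2-p}$ to absorb the $a$-dependence of the source term, and once on $z \mapsto z^{(p-1)/p}$ to convert the bound on $w$ into a bound on its derivative. I expect this to be the main—and only—technical subtlety; the rest reduces to integration and a short barrier argument for strict positivity.
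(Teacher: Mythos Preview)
Your proposal is correct and follows essentially the same route as the paper. The only minor variation is in the upper bound for $\psi(y,a_2)-\psi(y,a_1)$: the paper builds the barrier $\xi_2(y)=\psi(y,a_1)+My$ with $M=\tfrac{p}{p-1}\bigl(a_2^{2-p}-a_1^{2-p}\bigr)$ and applies Lemma~\ref{lemc0} to $(\psi_2,\xi_2)$, whereas you integrate the differential inequality for $w$ directly; both rest on the same monotonicity and sub-additivity facts, and your version even yields the slightly sharper constant $K(p)=p/[2(p-1)]$.
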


\begin{proof}
Set $\psi_i=\psi(\cdot,a_i)$, $i=1,2$. Since $a_1<a_2$, it readily
follows from \eqref{i4}-\eqref{i5} that we can apply
Lemma~\ref{lemc0} with $(\xi_1,\xi_2)=(\psi_1,\psi_2)$.
Consequently, $\psi_1\le \psi_2$ in $[0,1)$.

We next put $M:=p \left( a_2^{2-p} - a_1^{2-p} \right)/(p-1)$ and
$\xi_2(y)=\psi_1(y)+My$ for $y\in [0,1)$. Then
$\xi_2(0)=0=\psi_2(0)$ and it follows from \eqref{c4} that, for
$y\in (0,1)$,
\begin{align*}
\xi_2'(y) + \frac{p}{p-1} \xi_2(y)^{(p-1)/p} & \ge \psi_1'(y) + M +\frac{p}{p-1} \psi_1(y)^{(p-1)/p} \\
& \ge M(1-y) + \frac{p a_1^{2-p}}{p-1} (1-y) = \frac{p a_2^{2-p}}{p-1} (1-y) \\
& = \psi_2'(y) + \frac{p}{p-1} \psi_2(y)^{(p-1)/p}\ .
\end{align*}
Applying Lemma~\ref{lemc0} to $(\xi_1,\xi_2) = (\psi_2,\xi_2)$
entails that $\psi_2\le \xi_2$ in $[0,1)$, which completes the proof
of the first statement of Lemma~\ref{lemc1}. We next infer from
\eqref{c4}, the H\"older continuity of $z\mapsto z^{(p-1)/p}$, and the first statement of Lemma~\ref{lemc1} that
\begin{align*}
|\psi_1'(y) -\psi_2'(y)| & \le \frac{p}{p-1} |\psi_1(y) - \psi_2(y)|^{(p-1)/p} + \frac{p}{p-1} (a_2-a_1)^{2-p} \\
& \le \frac{p}{p-1}  K(p)^{(p-1)/p} (a_2-a_1)^{(2-p)(p-1)/p} +
\frac{p}{p-1} (a_2-a_1)^{2-p}\ ,
\end{align*}
and thus complete the proof of the continuous dependence with
respect to $a$.

Finally, since $a_1<a_2$, it follows that
$$\bar{y}:=\sup\{y\in(0,1): \psi_1(z)<\psi_2(z) \ {\rm for} \ z\in(0,y)\}>0.$$
Assume for contradiction that $\bar{y}\in(0,1)$. Then
$\psi_2(\bar{y})=\psi_1(\bar{y})$ and, since $\psi_2\geq\psi_1$ in
$(0,1)$, then $\bar{y}$ is a point of minimum for $\psi_2-\psi_1$,
so that $(\psi_2-\psi_1)'(\bar{y})=0$. We infer from \eqref{c4} that
$$
0=(\psi_2-\psi_1)'(\bar{y})+\frac{p}{p-1}\left[\psi_2^{(p-1)/p}(\bar{y})-\psi_1^{(p-1)/p}(\bar{y})\right]=\frac{p}{p-1}(a_2^{2-p}-a_1^{2-p})(1-\bar{y}),
$$
which leads to $a_1=a_2$, hence a contradiction. This proves that
$\bar{y}=1$ and thereby completes the proof of Lemma~\ref{lemc1}.
\end{proof}

\subsection{Behavior of $\psi(y,a)$ as $y\to 1$}\label{s3.2}

We next describe the shape of $\psi(\cdot,a)$.

\begin{lemma}\label{lemc2}
Given $a>0$ there is $y_a\in (0,1)$ such that
\begin{equation}
\psi'(y_a,a)=0\ , \quad \psi'(y,a) (y-y_a)<0\ , \qquad y\in (0,1)\setminus\{y_a\}\ . \label{c7}
\end{equation}
Moreover there is $\ell(a)\ge 0$ such that
\begin{equation}
\lim_{y\to 1} \psi(y,a) = \ell(a)\ , \label{c8}
\end{equation}
and
\begin{equation}
\psi(y,a) \ge a^{p(2-p)/(p-1)} (1-y)^{p/(p-1)} \ , \qquad y\in (y_a,1) \ . \label{c8b}
\end{equation}
\end{lemma}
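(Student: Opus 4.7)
The plan is to reformulate the ODE \eqref{c4} so that the sign of $\psi'$ is governed by the position of $\psi$ relative to a natural benchmark function. Introduce
\begin{equation*}
\Phi(y) := a^{p(2-p)/(p-1)} (1-y)^{p/(p-1)} \ , \qquad y \in [0,1]\ ,
\end{equation*}
so that $\Phi(y)^{(p-1)/p} = a^{2-p}(1-y)$, and rewrite \eqref{c4} as
\begin{equation*}
\psi'(y) = \frac{p}{p-1}\left[ \Phi(y)^{(p-1)/p} - \psi(y)^{(p-1)/p} \right]\ , \qquad y\in (0,1)\ .
\end{equation*}
By strict monotonicity of $z\mapsto z^{(p-1)/p}$ on $[0,\infty)$, the sign of $\psi'(y)$ is exactly the sign of $\Phi(y) - \psi(y)$. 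Note also that $\Phi$ is strictly decreasing on $[0,1]$ with $\Phi(0) = a^{p(2-p)/(p-1)} > 0 = \psi(0)$ and $\Phi(1)=0$, while $\psi'(0)>0$ by \eqref{c6}.

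Next I would define $y_a := \inf\{y\in(0,1): \psi(y) \ge \Phi(y)\}$ and argue that this set is nonempty. Indeed, if $\psi < \Phi$ on all of $(0,1)$, then $\psi'>0$ on $(0,1)$, so $\psi$ is strictly increasing; since $\psi'(0)>0$, we would have $\psi(y) \ge \psi(1/2) > 0$ for $y \in [1/2,1)$, contradicting $\psi \le \Phi \to 0$ as $y\to 1$. By continuity of $\psi - \Phi$ and the definition of $y_a$, we then have $\psi < \Phi$ on $[0,y_a)$ (hence $\psi'>0$ there) and $\psi(y_a)=\Phi(y_a)$, so $\psi'(y_a)=0$.

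The core step is to show that $\psi' < 0$ throughout $(y_a,1)$, equivalently $\psi > \Phi$ there. First, since
\begin{equation*}
(\psi - \Phi)'(y_a) = \psi'(y_a) - \Phi'(y_a) = -\Phi'(y_a) > 0\ ,
\end{equation*}
$\psi-\Phi$ is strictly positive in a right neighborhood of $y_a$. Assume for contradiction that $\psi'$ vanishes somewhere on $(y_a,1)$, and let $y_1\in(y_a,1)$ be the smallest such point. Then $\psi(y_1)=\Phi(y_1)$ while $(\psi-\Phi)(y)>0$ on $(y_a,y_1)$, which forces $(\psi-\Phi)'(y_1)\le 0$; but $(\psi-\Phi)'(y_1) = -\Phi'(y_1) > 0$, a contradiction. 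Hence $\psi'<0$ on $(y_a,1)$, proving \eqref{c7}. The same argument simultaneously yields $\psi(y) > \Phi(y)$ for all $y \in (y_a,1)$, which is \eqref{c8b}.

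Finally, since $\psi$ is strictly decreasing on $(y_a,1)$ and bounded below by $0$ (recall that $\psi\ge 0$, as $\psi = |f'|^p/a^p$), the monotone limit $\ell(a) := \lim_{y\to 1}\psi(y,a) \ge 0$ exists, which gives \eqref{c8}. The main subtlety is the contradiction step controlling the second vanishing of $\psi'$, as it relies crucially on the fact that $\Phi$ is strictly decreasing, so that whenever the graphs of $\psi$ and $\Phi$ cross, they cross transversally in a prescribed direction.
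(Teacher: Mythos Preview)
Your proof is correct. The paper's argument is organized slightly differently: it defines $y_a$ as the first zero of $\psi'$, differentiates \eqref{c4}, and observes that at any critical point $\psi''=-p a^{2-p}/(p-1)<0$, so every critical point is a strict local maximum, which rules out a second one; the bound \eqref{c8b} is then derived afterwards by dropping the $\psi'$ term in \eqref{c4}. Your reformulation via the benchmark $\Phi$ recasts the same obstruction geometrically: zeros of $\psi'$ are exactly crossings $\psi=\Phi$, and at each such crossing $(\psi-\Phi)'=-\Phi'>0$, so $\psi-\Phi$ can only cross upward, never downward. This is essentially the first-order analogue of the paper's second-derivative computation, and it has the pleasant side effect that \eqref{c8b} (namely $\psi>\Phi$ on $(y_a,1)$) falls out of the crossing analysis rather than requiring a separate step.
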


\begin{proof}
We define $y_a := \inf\{ y\in (0,1)\ :\ \psi'(y)=0\}$ and note that $y_a>0$ by \eqref{c6}. Assume for contradiction that $y_a=1$. Then $\psi'>0$ in $[0,1)$ and it follows from \eqref{c1} and \eqref{c4} that
$$
0 \le \psi(y)^{(p-1)/p} \le a^{2-p} (1-y)\ , \qquad y\in (0,1)\ .
$$
Consequently, $\psi(1)=0=\psi(0)$ which contradicts the strict monotonicity of $\psi$. Therefore $y_a\in (0,1)$ with $\psi'>0$ in $[0,y_a)$, $\psi'(y_a)=0$, and
$$
\psi''(y_a) = - \psi(y_a)^{-1/p} \psi'(y_a) - \frac{p a^{2-p}}{p-1} = - \frac{p a^{2-p}}{p-1} <0\ .
$$
In particular, $\psi'$ is negative in a right neighborhood of $y_a$.
Assume for contradiction that there is $z\in (y_a,1)$ such that
$\psi'(y)<0$ for $y\in (y_a,z)$ and $\psi'(z)=0$. Then $\psi''(z)\ge
0$, while \eqref{c4} entails that $\psi''(z) = -p a^{2-p}/(p-1)<0$,
and a contradiction. We have thus proved \eqref{c7} which, together
with \eqref{c1}, implies in particular that $\psi$ is positive and
decreasing on $(y_a,1)$, hence \eqref{c8}.

Finally, if $y\in [y_a,1)$, one has $\psi'(y)<0$ by \eqref{c7} and we infer from \eqref{c4} that
$$
\frac{p}{p-1} \psi(y)^{(p-1)/p} \ge \frac{p a^{2-p}}{p-1} (1-y)\ ,
$$
from which \eqref{c8b} readily follows.
\end{proof}

The next step, which is the cornerstone of the classification of the behavior of $\psi(\cdot,a)$ according to the value of $a$, is to elucidate the behavior of $\psi(y,a)$ as $y\to 1$. While it is obvious if $\ell(a)>0$, more information is needed when $\ell(a)=0$.

\begin{lemma}\label{lemc3}
Let $a>0$ and assume that $\ell(a)=0$. Then $y\mapsto \psi(y,a) (1-y)^{-p}$ has a limit as $y\to 1$ and
\begin{eqnarray}
& & 0 \le \psi(y,a) \le \kappa (1-y)^p\ , \qquad y\in (0,1)\ , \label{c9} \\
& & \lim_{y\to 1} \psi(y,a) (1-y)^{-p} \in \{ 0 , \kappa \}\ , \label{c10}
\end{eqnarray}
where $\kappa := (p-1)^{-p}$.
\end{lemma}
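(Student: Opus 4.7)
The plan is to exhibit a monotone auxiliary quantity that immediately yields the upper bound \eqref{c9}, and then to reduce the question of the limit at $y=1$ to a one-dimensional dynamical argument for a suitably renormalized unknown.

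For the upper bound, I would introduce
$$V(y) := (p-1)\,\psi(y,a)^{1/p} - (1-y).$$
Differentiating and substituting \eqref{c4}, the constants are arranged so that the $+1$ produced by $-(1-y)$ cancels the $-1$ coming from the nonlinear term $\tfrac{p}{p-1}\psi^{(p-1)/p}$, leaving
$$V'(y) = a^{2-p}(1-y)\,\psi(y,a)^{-(p-1)/p} > 0 \qquad \text{on } (0,1).$$
Since $\psi(y,a)\to \ell(a)=0$ and $1-y\to 0$ as $y\to 1$, one has $V(y)\to 0$, and the monotonicity of $V$ forces $V\le 0$ throughout $[0,1)$. Unravelling the definition gives $\psi(y,a)\le \kappa(1-y)^p$, establishing \eqref{c9}.

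For \eqref{c10}, I would then work with $\eta(y) := \psi(y,a)^{1/p}/(1-y)$, which is positive on $(0,1)$ and bounded by $1/(p-1)$ thanks to Step~1, and switch to the logarithmic variable $s := -\log(1-y)$. A short computation using \eqref{c4} casts the problem as the autonomous-plus-perturbation ODE
$$\frac{d\eta}{ds} = \eta - \frac{1}{p-1} + \frac{a^{2-p}\,e^{-(2-p)s}}{(p-1)\,\eta^{p-1}},$$
whose autonomous part is negative on $(0,1/(p-1))$ and vanishes precisely at the endpoint $1/(p-1)=\kappa^{1/p}$, while the perturbation $e^{-(2-p)s}$ decays to $0$ as $s\to\infty$ as long as $\eta$ stays away from $0$.

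The existence and value of the limit are then obtained by a trap argument. Supposing for contradiction that $\lambda := \liminf \eta < \limsup \eta =: \Lambda$, I fix any $\mu \in (\lambda,\Lambda)$ (necessarily $\mu > 0$): the continuous function $\eta$ must cross the level $\mu$ upward infinitely often along a sequence $s_n\to\infty$, yet the ODE evaluated at these points yields $d\eta/ds(s_n) \to \mu - 1/(p-1) < 0$, contradicting the non-negativity of $d\eta/ds$ at an upward crossing. Hence $L := \lim \eta$ exists in $[0, 1/(p-1)]$. A parallel application of the ODE excludes $L \in (0, 1/(p-1))$, since otherwise $d\eta/ds \to L - 1/(p-1) < 0$ would drive $\eta$ to $-\infty$. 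Thus $L \in \{0, 1/(p-1)\}$ and $\psi(y,a)(1-y)^{-p} = \eta^p \to L^p \in \{0,\kappa\}$, which is \eqref{c10}. The delicate step is the oscillation argument: fixing $\mu>0$ in advance is essential, so that $\mu^{p-1}$ stays bounded away from $0$ along the crossing sequence and the perturbation term $a^{2-p}e^{-(2-p)s_n}/[(p-1)\mu^{p-1}]$ has an unambiguous limit $0$.
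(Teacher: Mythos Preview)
Your proof is correct. Your Step~1 is essentially the paper's own argument in slightly different notation: the paper computes $(\psi^{1/p})'+\tfrac{1}{p-1}\ge 0$ and integrates over $(y,1)$, which is exactly the monotonicity of your $V$ together with $V(1^-)=0$.

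Your Step~2, however, takes a genuinely different route. The paper works directly with $\varphi=\eta^p$, writes the ODE \eqref{c11} for $\varphi$, and integrates it over $(0,y)$ to obtain the identity
\[
\varphi(y) + p\int_0^y \Phi(z)\,dz = \frac{pa^{2-p}}{(p-1)(2-p)}\bigl[1-(1-y)^{2-p}\bigr],
\]
with $\Phi(y)=\varphi(y)^{(p-1)/p}\bigl(\kappa^{1/p}-\varphi(y)^{1/p}\bigr)/(1-y)\ge 0$ by \eqref{c9}. The right-hand side is bounded, so $\Phi\in L^1(0,1)$ and the existence of $L=\lim_{y\to 1}\varphi(y)$ follows immediately; then $(1-y)\Phi(y)\to L^{(p-1)/p}(\kappa^{1/p}-L^{1/p})$, and integrability forces this limit to vanish, hence $L\in\{0,\kappa\}$. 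Your approach instead passes to logarithmic time $s=-\log(1-y)$, obtains an asymptotically autonomous scalar ODE for $\eta$, and runs a crossing/trap argument to show convergence to an equilibrium of the limiting autonomous flow. The paper's integration trick is shorter and exploits the explicit algebraic structure of \eqref{c4}; your dynamical argument is more robust in that it would transfer to perturbations of \eqref{c4} for which no clean integral identity is available, and it makes transparent why only the endpoints $0$ and $\kappa^{1/p}$ can occur (they are the zeros of the autonomous drift $\eta-\tfrac{1}{p-1}$ on the admissible interval $[0,1/(p-1)]$).
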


\begin{proof}
It readily follows from \eqref{c4} that, for $y\in (0,1)$,
\begin{align*}
\left( \psi^{1/p} \right)'(y) + \frac{1}{p-1} & = \frac{1}{p} \psi(y)^{-(p-1)/p} \psi'(y) + \frac{1}{p-1} \\
& = \frac{a^{2-p}}{p-1} (1-y) \psi(y)^{-(p-1)/p} \ge 0\ .
\end{align*}
Integrating the above differential inequality over $(y,1)$ and using $\ell(a)=0$ lead us to
$$
\frac{1}{p-1} \ge \psi(y)^{1/p} + \frac{y}{p-1}\ , \qquad y\in (0,1)\ ,
$$
hence \eqref{c9}.

We next define
\begin{equation}
\varphi(y) = \varphi(y,a) := \psi(y,a) (1-y)^{-p}\ , \qquad y\in [0,1)\ , \label{g}
\end{equation}
and deduce from \eqref{c4}-\eqref{c5} that $\varphi$ solves
\begin{eqnarray}
& & \varphi'(y) + \frac{p}{1-y} \varphi(y)^{(p-1)/p} \left( \kappa^{1/p} - \varphi(y)^{1/p} \right) = \frac{p a^{2-p}}{p-1} (1-y)^{1-p}\ , \qquad y\in (0,1)\ , \label{c11} \\
& & \varphi(0) = 0\ . \label{c12}
\end{eqnarray}
Integrating \eqref{c13} over $(0,y)$ and using \eqref{c12} give
\begin{equation}
\varphi(y) + p \int_0^y \Phi(z)\ dz = \frac{p a^{2-p}}{(p-1)(2-p)} \left[ 1 - (1-y)^{2-p} \right] \label{c13}
\end{equation}
for $y\in [0,1)$, where
$$
\Phi(y) := \varphi(y)^{(p-1)/p} \left[  \frac{\kappa^{1/p} - \varphi(y)^{1/p}}{1-y} \right]\ , \qquad y\in [0,1)\ .
$$
We then infer from \eqref{c9} that $\Phi\ge 0$ in $(0,1)$, which
gives, together with \eqref{c13} and the non-negativity of
$\varphi$,
$$
0 \le p \int_0^y \Phi(z)\ dz \le \frac{p a^{2-p}}{(p-1)(2-p)} \ , \qquad y\in [0,1)\ .
$$
Consequently, $\Phi\in L^1(0,1)$ and \eqref{c13} ensures that $\varphi(y)$ has a limit $L$ as $y\to 1$ given by
$$
\lim_{y\to 1} \varphi(y) = L := \frac{p a^{2-p}}{(p-1)(2-p)} - p \int_0^1 \Phi(y)\ dy\ .
$$
Recalling the definition of $\Phi$, we realize that
$$
\lim_{y\to 1} (1-y) \Phi(y) = L^{(p-1)/p} \left( \kappa^{1/p} - L^{1/p} \right)\ ,
$$
and the integrability of $\Phi$ implies that $L\in \{0,\kappa\}$.
\end{proof}

\subsection{Classification}\label{s3.3}

The outcome of Lemma~\ref{lemc2} and Lemma~\ref{lemc3} allows us to split the range of $a$ into three sets according to the behavior of $\psi(y,a)$ as $y\to 1$. More precisely, we define
\begin{eqnarray*}
\mathcal{A} & := & \left\{ a\in (0,\infty)\ : \ \ell(a)>0 \right\}\ , \\
\mathcal{B} & := & \left\{ a\in (0,\infty)\ : \ \lim_{y\to 1} \psi(y,a) (1-y)^{-p} = \kappa \right\}\ , \\
\mathcal{C} & := & \left\{ a\in (0,\infty)\ : \ \lim_{y\to 1} \psi(y,a) (1-y)^{-p} = 0 \right\}\ .
\end{eqnarray*}
Indeed, according to Lemma~\ref{lemc2} and Lemma~\ref{lemc3}, the sets $\mathcal{A}$, $\mathcal{B}$, and $\mathcal{C}$ are disjoint and
$$
\mathcal{A} \cup \mathcal{B} \cup \mathcal{C} = (0,\infty)\ .
$$

We now provide a more accurate description of these sets and begin with $\mathcal{A}$.

\begin{lemma}\label{lemc4}
There holds
\begin{equation}
a \in \mathcal{A} \ {\rm if \ and \ only \ if }\; \ \sup_{y\in
[0,1)} \left\{ \psi(y,a) (1-y)^{-p} \right\} > \kappa\ . \label{c15}
\end{equation}
Furthermore, there is $a^*>0$ such that $\mathcal{A}=(a^*,\infty)$.
\end{lemma}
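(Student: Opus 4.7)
The plan is to prove the two assertions in sequence. For the equivalence \eqref{c15}, I would argue directly from the classification carried out in the previous subsection: if $a\in\mathcal{A}$ then $\ell(a)>0$, so $\psi(y,a)(1-y)^{-p}\to+\infty$ as $y\to 1$ and the supremum strictly exceeds $\kappa$. Conversely, if the supremum exceeds $\kappa$ but we had $\ell(a)=0$, then \eqref{c9} in Lemma~\ref{lemc3} would force $\psi(y,a)(1-y)^{-p}\le\kappa$ for every $y\in(0,1)$, a contradiction; hence $\ell(a)>0$ and $a\in\mathcal{A}$.

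Given \eqref{c15}, the structural statement $\mathcal{A}=(a^*,\infty)$ follows from Lemma~\ref{lemc1}. Strict monotonicity $\psi(y,a_1)<\psi(y,a_2)$ on $(0,1)$ for $a_1<a_2$ shows that $\mathcal{A}$ is upward closed, and the continuous dependence estimates imply that $\mathcal{A}$ is open: indeed, if $\psi(y_0,a)(1-y_0)^{-p}>\kappa$ at some $y_0\in(0,1)$, the same strict inequality persists for $a'$ close to $a$. Hence $\mathcal{A}=(a^*,\infty)$ for some $a^*\in[0,\infty]$. To see that $a^*<\infty$, I would argue by contradiction that if $\psi(y,a)\le\kappa(1-y)^p$ held throughout $(0,1)$, then $\psi^{(p-1)/p}\le\kappa^{(p-1)/p}(1-y)^{p-1}$ and \eqref{c4} would give
$$
\psi'(y,a)\ge\frac{pa^{2-p}(1-y)}{p-1}-p\kappa(1-y)^{p-1},\qquad y\in(0,1).
$$
Restricting to $y\in[0,1/2]$, where $(1-y)^{p-1}\le 1$, and integrating yields $\psi(1/2,a)\ge c_p\,a^{2-p}$ for some $c_p>0$ once $a$ exceeds a $p$-dependent threshold, contradicting $\psi(1/2,a)\le\kappa/2^p$ for $a$ sufficiently large.

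The substantive point is the strict lower bound $a^*>0$, for which I would build an explicit supersolution of \eqref{c4}. With $B:=1/p^p$ and $\bar\psi(y):=B(1-y)^p$, a direct computation gives
$$
\bar\psi'(y)+\frac{p}{p-1}\bar\psi(y)^{(p-1)/p}=\frac{p(1-y)^{p-1}}{p-1}\bigl[B^{(p-1)/p}-(p-1)B\bigr],
$$
so $\bar\psi$ is a supersolution of \eqref{c4}--\eqref{c5} exactly when
$$
(1-y)^{p-2}\bigl[B^{(p-1)/p}-(p-1)B\bigr]\ge a^{2-p},\qquad y\in[0,1).
$$
Since $p<2$ gives $(1-y)^{p-2}\ge 1$ on $[0,1)$, and since $B^{(p-1)/p}-(p-1)B=1/p^{p-1}-(p-1)/p^p=1/p^p$ at $B=1/p^p$, the inequality is satisfied for every $y\in[0,1)$ as soon as $a^{2-p}\le 1/p^p$. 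Combined with $\bar\psi(0)=B>0=\psi(0,a)$, Lemma~\ref{lemc0} yields $\psi(y,a)\le\bar\psi(y)=B(1-y)^p$ on $[0,1)$, whence
$$
\sup_{y\in[0,1)}\psi(y,a)(1-y)^{-p}\le B=\frac{1}{p^p}<\frac{1}{(p-1)^p}=\kappa,
$$
using $p-1<p$. By \eqref{c15} this means $a\notin\mathcal{A}$, so $a^*\ge p^{-p/(2-p)}>0$. The delicate step here is guessing the right barrier: the ansatz $B(1-y)^p$ is natural because $\kappa(1-y)^p$ is precisely the upper bound produced by Lemma~\ref{lemc3} when $\ell(a)=0$, and choosing $B=1/p^p$ strictly below $\kappa$ leaves exactly the slack needed to absorb the inhomogeneous right-hand side $\frac{pa^{2-p}(1-y)}{p-1}$ of \eqref{c4} when $a$ is small.
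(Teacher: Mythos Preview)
Your argument is correct and, for the equivalence \eqref{c15} and the non-emptiness of $\mathcal{A}$, essentially matches the paper's proof. There are two differences worth noting. First, for openness of $\mathcal{A}$ you fix $y_0\in(0,1)$ with $\varphi(y_0,a)>\kappa$ and invoke continuous dependence at that single point, whereas the paper passes the estimate of Lemma~\ref{lemc1} to the limit $y\to 1$ to obtain $|\ell(a)-\ell(a')|\le K(p)|a-a'|^{2-p}$ directly; both are valid and equally short. Second, and more substantively, you prove $a^*>0$ here by building the barrier $\bar\psi(y)=p^{-p}(1-y)^p$, which gives $\sup_y\varphi(y,a)\le p^{-p}<\kappa$ for $a^{2-p}\le p^{-p}$. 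The paper, by contrast, does \emph{not} prove $a^*>0$ within Lemma~\ref{lemc4}; this is deferred to Lemma~\ref{lemc5}, where the barrier $\Sigma_A(y)=A(1-y)^{p/(p-1)}$ (with the larger exponent $p/(p-1)$) is used to show the stronger fact that $\mathcal{C}\ne\emptyset$, whence $a^*\ge a_*>0$. Your choice of exponent $p$ is the natural one if the only goal is $a\notin\mathcal{A}$, and it makes Lemma~\ref{lemc4} self-contained; the paper's choice of exponent $p/(p-1)$ is needed later anyway to place small $a$ in $\mathcal{C}$ rather than merely outside $\mathcal{A}$.
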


\begin{proof}
As in the proof of Lemma~\ref{lemc3}, see Equation~\eqref{g}, we set $\varphi(y) = \psi(y) (1-y)^{-p}$ for $y\in [0,1)$.

\medskip

\noindent\textbf{Step~1.} If $a\in \mathcal{A}$ then $\ell(a)>0$,
from which we readily deduce that $\varphi(y)\to\infty$ as $y\to 1$,
and obviously $\sup\limits_{y\in[0,1)}\{\varphi(y)\} > \kappa$.
Conversely, if $\sup\limits_{y\in[0,1)}\{\varphi(y)\} > \kappa$,
then necessarily $\ell(a)\ne 0$ according to Lemma~\ref{lemc3} and
thus $a\in\mathcal{A}$.

\medskip

\noindent\textbf{Step~2.} We claim that $\mathcal{A}$ is non-empty. Indeed, assume for contradiction that $\mathcal{A}=\emptyset$, so that $\ell(a)=0$ for all $a>0$. We then infer from \eqref{c9}, \eqref{c11}, and the non-negativity of $\varphi$ that
$$
\varphi'(y) \ge \frac{p a^{2-p}}{p-1} (1-y)^{1-p} - \frac{p \kappa^{1/p}}{1-y} \varphi(y)^{(p-1)/p} \ge \frac{p a^{2-p}}{p-1} (1-y)^{1-p} - \frac{p \kappa}{1-y}
$$
for $y\in (0,1)$. Integrating over $(0,1/2)$ and using once more \eqref{c9} give
$$
\kappa \ge \varphi(1/2) \ge \frac{p a^{2-p}}{(p-1)(2-p)} \left( 1-2^{p-2} \right) - p \kappa \log{2}\ ,
$$
and a contradiction for $a$ large enough. Consequently, $\mathcal{A}$ is non-empty.

\medskip

\noindent\textbf{Step~3.} We put $a^* := \inf \mathcal{A}$. A
straightforward consequence of the characterization \eqref{c15} and
the monotonicity of $\psi(\cdot,a)$ with respect to $a$ established
in Lemma~\ref{lemc1} and \eqref{c15} is that $(a^*,\infty)\subset
\mathcal{A}$. Furthermore, if $a\in \mathcal{A}$, then $\ell(a)>0$
and it follows from Lemma~\ref{lemc1} that, for $\delta\in (0,a)$
$$
0<\ell(a)\le \ell(a+\delta) \;\text{ and }\; 0<\ell(a) \le \ell(a-\delta) + K(p) \delta^{2-p}\ .
$$
Therefore $\ell(a+\delta)>0$ and $\ell(a-\delta)>0$ for $\delta$
small enough, so that $(a-\delta,a+\delta)\subset \mathcal{A}$ for
$\delta>0$ small enough. In particular, $\mathcal{A}$ is open and
thus coincides with $(a^*,\infty)$.
\end{proof}

Concerning $\mathcal{C}$ one has the following result.

\begin{lemma}\label{lemc5}
The following statements are equivalent:
\begin{itemize}
\item[(c1)] $a \in \mathcal{C}$.
\item[(c2)] $\sup\limits_{y\in [0,1)} \left\{ \psi(y,a) (1-y)^{-p} \right\} < \kappa$.
\item[(c3)] The derivative $\varphi'(\cdot, a)$ of the function $\varphi(\cdot,a)$ defined in \eqref{g} vanishes at least once in
$(0,1)$.
\item[(c4)] There is $Y_a\in (0,1)$ such that
\begin{equation}
\varphi'(Y_a,a)=0\ , \quad \varphi'(y,a) (y-Y_a) < 0\ , \qquad y\in (0,1)\setminus\{Y_a\} \ . \label{c17}
\end{equation}
\end{itemize}
Furthermore there is $a_*>0$ such that $\mathcal{C}=(0,a_*)$.
\end{lemma}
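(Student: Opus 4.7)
My plan is to tackle this lemma in three stages: first establish the four-way equivalence, then prove the down-closedness and non-emptiness of $\mathcal{C}$, and finally exclude the endpoint $a_*$ from $\mathcal{C}$.

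For the equivalences, the cornerstone is the observation that at any critical point $y_0\in(0,1)$ of $\varphi(\cdot,a)$, equation~\eqref{c11} forces $\varphi(y_0,a)\in(0,\kappa)$ (because the right-hand side of \eqref{c11} is strictly positive), while differentiating \eqref{c11} and substituting $\varphi'(y_0,a)=0$ yields
\begin{equation*}
\varphi''(y_0,a) \;=\; \frac{pa^{2-p}}{(1-y_0)^p}\cdot\frac{p-2}{p-1}\;<\;0,
\end{equation*}
the negative sign being a consequence of $p\in(1,2)$. Hence $\varphi(\cdot,a)$ admits at most one critical point in $(0,1)$ (two critical points would sandwich a local minimum at which $\varphi''\ge 0$, contradicting the formula above), and this critical point is a strict local maximum. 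Combined with $\varphi'(0^+,a)=pa^{2-p}/(p-1)>0$, this gives (c3) $\Leftrightarrow$ (c4) and also (c4) $\Rightarrow$ (c2) because $\sup_{y\in[0,1)}\varphi(y,a)=\varphi(Y_a,a)<\kappa$. For (c2) $\Rightarrow$ (c1), condition (c2) rules out $\ell(a)>0$ (which would force $\varphi(y,a)\to\infty$ as $y\to 1$), so $\ell(a)=0$ and Lemma~\ref{lemc3} then forces the limit of $\varphi(y,a)$ at $y=1$ to be $0$. Finally, (c1) $\Rightarrow$ (c3) is immediate from $\varphi(0,a)=0$, $\varphi'(0^+,a)>0$, and $\varphi(y,a)\to 0$ as $y\to 1$.

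For the down-closedness of $\mathcal{C}$, I would combine the monotonicity of $\psi(\cdot,a)$ in $a$ from Lemma~\ref{lemc1} with characterization (c2). To prove non-emptiness, I would evaluate $\varphi'$ via \eqref{c11} at any point $y_1$ where $\varphi(y_1,a)=\kappa/2$, obtaining an expression of the form
\begin{equation*}
\varphi'(y_1,a) \;=\; \frac{pa^{2-p}}{p-1}(1-y_1)^{1-p}\,-\,\frac{C_p}{1-y_1}
\end{equation*}
with $C_p>0$ depending only on $p$. This is non-positive uniformly in $y_1\in[0,1)$ provided $a$ is smaller than some explicit $a_0(p)>0$, because $(1-y_1)^{2-p}\le 1$. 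For such $a$, if $\varphi(\cdot,a)$ ever reached the level $\kappa/2$, its first crossing point $y_1$ would have to satisfy both $\varphi'(y_1,a)\ge 0$ (by minimality) and $\varphi'(y_1,a)\le 0$ (by the estimate), hence $\varphi'(y_1,a)=0$, making $y_1$ a strict local maximum with $\sup\varphi(\cdot,a)\le\kappa/2<\kappa$. By (c2), $a\in\mathcal{C}$. Setting $a_*:=\sup\mathcal{C}$, we obtain $a_*>0$ and $(0,a_*)\subseteq\mathcal{C}$ by down-closedness.

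To exclude $a_*$ from $\mathcal{C}$, I would argue by contradiction. If $a_*\in\mathcal{C}$, let $Y:=Y_{a_*}$ be its unique critical point and set $M_*:=\varphi(Y,a_*)<\kappa$; pick any $y_0\in(Y,1)$, so that the strict monotonicity of $\varphi(\cdot,a_*)$ past $Y$ given by (c4) yields $\varphi(y_0,a_*)<M_*$. Take a sequence $a_n\downarrow a_*$ with $a_n\notin\mathcal{C}$. The already-proved equivalence (c1) $\Leftrightarrow$ (c3) ensures that $\varphi(\cdot,a_n)$ has no critical point in $(0,1)$, and since $\varphi'(0^+,a_n)>0$ this function is strictly increasing on $[0,1)$, whence $\varphi(y_0,a_n)\ge\varphi(Y,a_n)$. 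The continuous dependence bound in Lemma~\ref{lemc1} then yields $\varphi(Y,a_n)\to M_*$ and $\varphi(y_0,a_n)\to\varphi(y_0,a_*)<M_*$, the desired contradiction. The main obstacle I anticipate is precisely this last argument: a direct attempt to control $\sup\varphi(\cdot,a)$ via continuous dependence is hampered by the singular factor $(1-y)^{-p}$ in the definition of $\varphi$, which amplifies $\psi$-errors as $y\to 1$. The resolution is to compare $\varphi(\cdot,a_n)$ with $\varphi(\cdot,a_*)$ only at the interior points $Y$ and $y_0$, exploiting the monotonicity in $y$ of $\varphi(\cdot,a_n)$ that is forced by $a_n\notin\mathcal{C}$.
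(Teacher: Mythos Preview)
Your proof is correct. The equivalence cycle (c1)$\Rightarrow$(c3)$\Rightarrow$(c4)$\Rightarrow$(c2)$\Rightarrow$(c1) is argued exactly as in the paper: the key point in both is that differentiating \eqref{c11} at a critical point gives $\varphi''(y_0,a)=-p(2-p)a^{2-p}(1-y_0)^{-p}/(p-1)<0$, forcing every critical point to be a strict maximum and hence unique.

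Where your argument genuinely diverges from the paper is in the last two steps. For non-emptiness, the paper builds an explicit supersolution $\Sigma_A(y)=A(1-y)^{p/(p-1)}$ to the $\psi$-equation and invokes the comparison principle (Lemma~\ref{lemc0}) to get $\psi(y,a)\le A(1-y)^{p/(p-1)}$ for small $a$; your level-set barrier on $\varphi$ at height $\kappa/2$ is a legitimate alternative that bypasses Lemma~\ref{lemc0} entirely. (One remark: you should also record the trivial branch where $\varphi(\cdot,a)$ never reaches $\kappa/2$, which immediately gives (c2).) For the exclusion of $a_*$, the paper uses the continuous dependence of $\varphi'(\cdot,a)$ on $a$, furnished by the second estimate of Lemma~\ref{lemc1}, to trap a zero of $\varphi'(\cdot,a)$ near $Y_{a_*}$ for all $a$ close to $a_*$. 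Your argument is more economical: it uses only the pointwise continuity of $\varphi(\cdot,a)$ at the two interior points $Y$ and $y_0$, together with the observation that $a_n\notin\mathcal{C}$ forces $\varphi(\cdot,a_n)$ to be monotone in $y$. This neatly sidesteps the singularity at $y=1$ that you rightly flag, and it avoids needing the $\psi'$-estimate in Lemma~\ref{lemc1} altogether.
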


\begin{proof}
Recall that $\varphi(y) = \psi(y) (1-y)^{-p}$ for $y\in [0,1)$, see Equation~\eqref{g}.

\medskip

\noindent\textbf{Step~1.} Assume first that
$\sup\limits_{y\in[0,1)}\{\varphi(y)\} < \kappa$. This property
readily implies that $\ell(a)=0$ and we deduce from
Lemma~\ref{lemc3} that the limit of $\varphi(y)$ as $y\to 1$ is
necessarily zero. Therefore $a\in\mathcal{C}$ and we have proved
that (c2) $\Rightarrow$ (c1).

Consider now $a\in\mathcal{C}$. Since $\varphi(0)=0$ by \eqref{c12} and $\varphi(y)\to 0$ as $y\to 1$, a generalization of Rolle's theorem guarantees that $\varphi'$ vanishes at least once in $(0,1)$, and (c1) $\Rightarrow$ (c3).

Assume next that $\varphi'$ vanishes at least once in $(0,1)$ and
denote its smallest zero by $Y_a\in (0,1)$. Since $\varphi'(0)=p
a^{2-p}/(p-1)>0$ by \eqref{c11}, the function
$\varphi'$ is positive in $[0,Y_a)$ and it follows from \eqref{c11}
that
$$
\varphi''(Y_a) = - \frac{p (2-p) a^{2-p}}{p-1} (1-Y_a)^{-p} < 0\ .
$$
Consequently, $\varphi'$ is negative in a right neighborhood of $Y_a$. Assume for contradiction that there is $Y_1\in (Y_a,1)$ such that $\varphi'(y)<0$ for $y\in (Y_a,Y_1)$ and $\varphi'(Y_1)=0$. Then $\varphi''(Y_1)\ge 0$ while \eqref{c11} implies that $\varphi''(Y_1) = - p (2-p) a^{2-p} (1-Y_1)^{-p}/(p-1)<0$, and a contradiction.  Therefore $\varphi'<0$ in $(Y_a,1)$ and we have shown that $\varphi$ enjoys the property \eqref{c17}, that is, (c3) $\Rightarrow$ (c4).

Finally, assume that $\varphi$ satisfies \eqref{c17}. Then
$\sup\limits_{y\in[0,1)}\{\varphi(y)\}=\varphi(Y_a)$ and we deduce
from \eqref{c11} that
$$
\frac{p}{1-Y_a} \varphi(Y_a)^{(p-1)/p} \left[ \kappa^{1/p} - \varphi(Y_a)^{1/p} \right] = \frac{p a^{2-p}}{p-1} (1-Y_a)^{1-p}>0\ .
$$
Consequently $\varphi(Y_a)<\kappa$ and (c4) $\Rightarrow$ (c2).

\medskip

\noindent\textbf{Step~2.} We now check that $\mathcal{C}$ is non-empty. To this end, consider $a>0$ such that
$$
a^{2-p} \le (p-1)^{p-1}/p^p = \max_{A\in (0,1)}\left\{ A^{(p-1)/p} - A \right\}\ .
$$
We fix $A\in (0,1)$ such that $A^{(p-1)/p} - A \ge a^{2-p}$ and set $\Sigma_A(y) = A (1-y)^{p/(p-1)}$ for $y\in [0,1)$. On the one hand,
\begin{align*}
\Sigma_A'(y) + \frac{p}{p-1} \Sigma_A(y)^{(p-1)/p} & = \frac{p}{p-1} (1-y) \left[ A^{(p-1)/p} - A (1-y)^{(2-p)/(p-1)} \right] \\
& \ge \frac{p}{p-1} (1-y) \left[ A^{(p-1)/p} - A \right] \\
& \ge \frac{p}{p-1} (1-y) a^{2-p} = \psi'(y) + \frac{p}{p-1} \psi(y)^{(p-1)/p}
\end{align*}
for $y\in (0,1)$. On the other hand, $\Sigma_A(0)=A>0=\psi(0)$. We are then in a position to apply Lemma~\ref{lemc0} with $(\xi_1,\xi_2)=(\psi,\Sigma_A)$ to conclude that
$$
0 \le \psi(y) \le A (1-y)^{p/(p-1)}\ , \qquad y\in [0,1)\ .
$$
Since $p<p/(p-1)$, the above estimate ensures that $a\in
\mathcal{C}$ and we have thus shown that $\mathcal{C}$ is non-empty
and contains the interval $\left( 0, (p-1)^{(p-1)/(2-p)}
p^{-p/(2-p)} \right]$.

\medskip

\noindent\textbf{Step~3.} Introducing $a_*:=\sup\{\mathcal{C}\}>0$,
we infer from the monotonicity of $\psi(\cdot,a)$ with respect to
$a$ (Lemma~\ref{lemc1}) that $(0,a_*)\subset \mathcal{C}$.

Assume for contradiction that $a_*\in\mathcal{C}$. Owing to \eqref{c17} there are $\delta>0$ and $\varepsilon>0$ such that
\begin{equation}
\varphi'(Y_{a_*}+\delta,a_*) < -2\varepsilon < \varphi'(Y_{a_*},a_*) = 0 < 2\varepsilon < \varphi'(Y_{a_*}-\delta,a_*) \label{c18}
\end{equation}
and
\begin{equation}
\frac{Y_{a_*}}{2} \le Y_{a_*} - \delta < Y_{a_*} < Y_{a_*} + \delta \le \frac{1+Y_{a_*}}{2}\ . \label{c19}
\end{equation}
Thanks to Lemma~\ref{lemc1}, $\varphi'(\cdot,a)$ depends
continuously on $a$ on $[0,(1+Y_{a_*})/2]$ and we infer from
\eqref{c18} and \eqref{c19} that there is $\alpha>0$ small enough
such that
$$
\varphi'(Y_{a_*}+\delta,a) < -\varepsilon < \varepsilon < \varphi'(Y_{a_*}-\delta,a)\ , \qquad a\in [a_*-\alpha,a_*+\alpha]\ .
$$
In particular, for all $a\in [a_*-\alpha,a_*+\alpha]$, the function
$\varphi'(\cdot,a)$ has a zero inside the interval
$(Y_{a_*}-\delta,Y_{a_*}+\delta)$. According to (c3), this means
that $[a_*-\alpha,a_*+\alpha]\subset\mathcal{C}$, which contradicts
the definition of $a_*$. Therefore $a_*\not\in \mathcal{C}$ and
$\mathcal{C} = (0,a_*)$.
\end{proof}

We finally turn to the description of the set $\mathcal{B}$ and show that it is a singleton.

\begin{proposition}\label{propc6}
There holds $a_*=a^*$ and $\mathcal{B}=\{a_*\}$, where $a^*$ and $a_*$ are defined in Lemma~\ref{lemc4} and Lemma~\ref{lemc5}, respectively.
\end{proposition}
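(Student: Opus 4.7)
The plan is as follows. The partition $(0,\infty)=\mathcal{A}\sqcup\mathcal{B}\sqcup\mathcal{C}$ together with $\mathcal{A}=(a^*,\infty)$ and $\mathcal{C}=(0,a_*)$ forces $\mathcal{A}\cap\mathcal{C}=\emptyset$, so $a_*\le a^*$ and $\mathcal{B}=[a_*,a^*]$. Both endpoints necessarily belong to $\mathcal{B}$ (neither can lie in the open intervals $\mathcal{A}$ or $\mathcal{C}$), so in particular $\mathcal{B}\ne\emptyset$, and the conclusion will follow once one shows that $\mathcal{B}$ contains at most one point.

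The core of the argument is to establish, for every $a\in\mathcal{B}$, the sharp asymptotic
\[
\lim_{y\to 1}\frac{\kappa-\varphi(y,a)}{(1-y)^{2-p}}=\frac{p\,a^{2-p}}{(p-1)(3-p)}.
\]
Since $\varphi(\cdot,a)\to\kappa$ as $y\to 1$ for $a\in\mathcal{B}$ and $\Phi(\cdot,a)\in L^1(0,1)$ by Lemma \ref{lemc3}, one can pass to the limit $y\to 1$ in \eqref{c13} to get $\kappa+p\int_0^1\Phi(z,a)\,dz=pa^{2-p}/[(p-1)(2-p)]$; subtracting \eqref{c13} then yields
\[
\kappa-\varphi(y,a) = \frac{p\,a^{2-p}}{(p-1)(2-p)}(1-y)^{2-p}-p\int_y^1 \Phi(z,a)\,dz.
\]
Near $z=1$, the expansion $\kappa^{1/p}-\varphi^{1/p}\sim (\kappa-\varphi)/(p\,\kappa^{(p-1)/p})$ gives $\Phi(z,a)\sim (\kappa-\varphi(z,a))/(p(1-z))$. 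Inserting the ansatz $\kappa-\varphi(y,a)\sim K(1-y)^{2-p}$ and matching leading orders produces the algebraic relation $K=pa^{2-p}/[(p-1)(2-p)]-K/(2-p)$, yielding $K=pa^{2-p}/[(p-1)(3-p)]$. A rigorous justification can be obtained by substituting $W:=\kappa^{1/p}-\varphi^{1/p}$ into \eqref{c11}, which produces a linear equation with integrating factor $(1-y)$ and the explicit representation
\[
(1-y)W(y)=\int_y^1 \frac{a^{2-p}(1-z)^{2-p}}{(p-1)(\kappa^{1/p}-W(z))^{p-1}}\,dz,
\]
from which the desired limit follows by L'H\^opital, using that $W(y)\to 0$ as $y\to 1$.

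The contradiction is then immediate. If $a_1<a_2$ both lie in $\mathcal{B}$, the strict monotonicity in the last statement of Lemma \ref{lemc1} forces $\varphi(y,a_1)<\varphi(y,a_2)$, i.e., $\kappa-\varphi(y,a_2)<\kappa-\varphi(y,a_1)$ for all $y\in(0,1)$. But the asymptotic above provides limits $pa_i^{2-p}/[(p-1)(3-p)]$ which are strictly increasing in $a_i$, so $\kappa-\varphi(y,a_2)>\kappa-\varphi(y,a_1)$ for $y$ close enough to $1$, a contradiction. Hence $\mathcal{B}$ is a singleton, which combined with the first paragraph gives $a_*=a^*$ and $\mathcal{B}=\{a_*\}$. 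The main obstacle is precisely the sharp asymptotic for $\kappa-\varphi$: the integral contribution $p\int_y^1\Phi(z,a)\,dz$ is of the \emph{same} order as the explicit term $pa^{2-p}(1-y)^{2-p}/[(p-1)(2-p)]$ in the identity, so it cannot be discarded, and the precise coefficient must be extracted from a self-consistency argument such as the $W$-substitution above.
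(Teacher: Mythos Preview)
Your argument is correct, but it follows a genuinely different route from the paper.

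The paper works directly with the difference $G:=\varphi(\cdot,a^*)-\varphi(\cdot,a_*)$. Subtracting the two instances of \eqref{c11} and using that $\varphi(y,a_*)$ is close to $\kappa$ for $y$ near $1$, one obtains the differential inequality $G'(y)\ge G(y)/[2(1-y)]$ on an interval $[Y,1)$. Integrating gives $G(y)\ge G(Y)\sqrt{(1-Y)/(1-y)}$, so if $a^*>a_*$ then $G(Y)>0$ by Lemma~\ref{lemc1} and $G(y)\to\infty$, contradicting $G(y)\to 0$ (both endpoints lie in $\mathcal{B}$).

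Your approach instead extracts, for each $a\in\mathcal{B}$ separately, the second-order asymptotic $\kappa-\varphi(y,a)\sim \dfrac{p\,a^{2-p}}{(p-1)(3-p)}(1-y)^{2-p}$, and then reaches a contradiction by comparing the strictly increasing dependence of this coefficient on $a$ with the strict monotonicity of $\varphi(\cdot,a)$ in $a$. One small quibble: the equation you obtain for $W=\kappa^{1/p}-\varphi^{1/p}$ is not literally linear (the right-hand side still involves $W$), but the point stands---the left-hand side has an exact integrating factor $(1-y)$, yielding an integral identity to which L'H\^opital applies since $W(y)\to 0$. The paper's route is shorter and avoids L'H\^opital, while yours delivers a sharper piece of information (the precise rate at which $\varphi(\cdot,a_*)$ approaches $\kappa$), which may be of independent interest.
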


\begin{proof}
Owing to Lemma~\ref{lemc4} and Lemma~\ref{lemc5} there holds
$\mathcal{B}=[a_*,a^*]$ and $\varphi'(\cdot,a)>0$ in $(0,1)$ for
$a\in\mathcal{B}$, recalling that the function $\varphi(\cdot,a)$ is
defined by \eqref{g}. Introducing $G:=\varphi(\cdot,a^*) -
\varphi(\cdot,a_*)$ it follows from Lemma~\ref{lemc1} and
\eqref{c11} that $G\ge 0$ and
\begin{align}
G'(y) & + \frac{p \kappa^{1/p}}{1-y} \left[ \varphi(y,a^*)^{(p-1)/p} - \varphi(y,a_*)^{(p-1)/p} \right] \nonumber\\
& = p\frac{G(y)}{1-y} + \frac{p}{p-1} \left[ (a^*)^{2-p} - (a_*)^{2-p} \right] (1-y)^{1-p} \label{c20}
\end{align}
for $y\in (0,1)$. Since $a_*\in\mathcal{B}$ we deduce from the definition of $\mathcal{B}$ that there is $Y\in (0,1)$ such that
$$
\varphi(y,a_*) \ge \left( p - \frac{1}{2} \right)^{-p}\ , \qquad y\in [Y,1)\ .
$$
Therefore, for $y\in [Y,1)$,
\begin{align*}
\varphi(y,a^*)^{(p-1)/p} - \varphi(y,a_*)^{(p-1)/p} & = \frac{p-1}{p}\ \int_{\varphi(y,a_*)}^{\varphi(y,a^*)} z^{-1/p}\ dz \\
& \le \frac{p-1}{p} \varphi(y,a_*)^{-1/p} G(y) \\
& \le \frac{(p-1)(2p-1)}{2p} G(y)\ .
\end{align*}
Combining the above estimate with \eqref{c20} gives, for $y\in [Y,1)$,
\begin{equation*}
\begin{split}
G'(y)+\frac{p}{(p-1)(1-y)}\frac{(p-1)(2p-1)}{2p}G(y)&\geq\frac{pG(y)}{1-y}+\frac{p}{p-1}\left[(a^*)^{2-p}-(a_*)^{2-p}\right](1-y)^{1-p}\\
&\geq \frac{pG(y)}{1-y},
\end{split}
\end{equation*}
whence, after easy manipulations,
$$
G'(y)\geq\frac{G(y)}{2(1-y)}, \quad y\in[Y,1).
$$
Integrating the above differential inequality on $[Y,y)$ for some
$y\in(Y,1)$, we find
\begin{equation}
G(y) \ge G(Y) \sqrt{\frac{1-Y}{1-y}}\ , \qquad y\in (Y,1)\ .
\label{c21}
\end{equation}
Assume now for contradiction that $a^*>a_*$. We deduce from Lemma
\ref{lemc1} and the fact that $Y\in(0,1)$ that
$\varphi(Y,a^*)>\varphi(Y,a_*)$, that is, $G(Y)>0$. It then follows
from \eqref{c21} that $G(y)\to\infty$ as $y\to 1$. However, the
definition of $\mathcal{B}$ entails that $G(y)\to 0$ as $y\to 1$,
clearly in contradiction with the previous assertion. Therefore
$a_*=a^*$ and the proof of Proposition~\ref{propc6} is complete.
\end{proof}

\subsection{Refined asymptotics as $y\to 1$ for $a\in\mathcal{C}$}\label{s3.4}

The final step is to identify the behavior of $\psi(y,a)$ as $y\to 1$ for $a\in \mathcal{C}$.

\begin{lemma}\label{lemc7}
If $a\in\mathcal{C}$ then
$$
\lim_{y\to 1} \psi(y,a) (1-y)^{-p/(p-1)} = a^{p(2-p)/(p-1)}\ .
$$
\end{lemma}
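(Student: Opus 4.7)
Since \eqref{c8b} already yields $\liminf_{y\to 1}\psi(y,a)(1-y)^{-p/(p-1)}\ge A$ with $A:=a^{p(2-p)/(p-1)}$, the task reduces to proving the matching upper bound.

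My plan is to scale out the expected rate by setting $\tau:=(1-y)^{-(2-p)/(p-1)}$ and $\tilde\varphi(\tau):=\psi(y,a)(1-y)^{-p/(p-1)}$. A direct computation from \eqref{c4} should then give
\[
\tilde\varphi'(\tau) = \frac{p\,\tilde\varphi(\tau)}{(2-p)\tau}-\frac{p}{2-p}\left[\tilde\varphi(\tau)^{(p-1)/p}-a^{2-p}\right],\qquad \tau\in[\tau_a,\infty),
\]
with $\tau_a:=(1-y_a)^{-(2-p)/(p-1)}$, while \eqref{c8b} translates to $\tilde\varphi\ge A$. The linearization of the nonlinearity $z\mapsto -p(z^{(p-1)/p}-a^{2-p})/(2-p)$ at $A$ has negative derivative $-(p-1)A^{-1/p}/(2-p)$, making $A$ an attracting equilibrium of the autonomous part, while the source term $p\tilde\varphi/((2-p)\tau)$ is only $O(\tau^{-1})$; I therefore expect $\tilde\varphi(\tau)\to A$.

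To turn this heuristic into a proof I will use a supersolution of the form $\Xi(\tau):=A+L\tau^{-\alpha}$ for some fixed $\alpha\in(0,1)$ (say $\alpha=1/2$) and $L>0$ to be chosen large. A mean value argument applied to the concave map $z\mapsto z^{(p-1)/p}$ on $[A,A+L\tau^{-\alpha}]$, together with the bound $\xi^{-1/p}\ge(A+L\tau^{-\alpha})^{-1/p}$, should yield a lower estimate of the form
\[
\Xi(\tau)^{(p-1)/p}-a^{2-p}\ge \frac{(p-1)L}{pA^{1/p}}\,\tau^{-\alpha}-C\tau^{-2\alpha},
\]
from which one deduces that $\Xi$ is a supersolution on $[\tau^*(L),\infty)$ for some threshold $\tau^*(L)$ tending to $0$ as $L\to\infty$. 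Fixing any $\tau_0>\tau_a$ and then picking $L$ large enough that $\tau_0\ge\tau^*(L)$ and $L\tau_0^{-\alpha}\ge\tilde\varphi(\tau_0)-A$ (both achievable since $\tilde\varphi(\tau_0)<\infty$), the standard comparison principle for this smooth ODE gives $\tilde\varphi(\tau)\le A+L\tau^{-\alpha}$ on $[\tau_0,\infty)$, so letting $\tau\to\infty$ yields $\limsup_{\tau\to\infty}\tilde\varphi(\tau)\le A$, which with the lower bound closes the proof.

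The main technical obstacle I anticipate is the supersolution inequality itself: since $z\mapsto z^{(p-1)/p}$ is concave, Bernoulli's inequality provides a bound in the wrong direction, so a sharper mean value estimate is required to get a clean lower bound on $\Xi^{(p-1)/p}$. The resulting quadratic remainder $O(\tau^{-2\alpha})$ must then be dominated by the linear damping $\sim L\tau^{-\alpha}$ for large $\tau$, which is where the restriction $\alpha<1$ enters (ensuring that the damping $\tau^{-\alpha}$ decays more slowly than the source $\tau^{-1}$).
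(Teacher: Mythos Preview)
Your change of variable and the derived ODE for $\tilde\varphi$ are correct, and the overall strategy (build a supersolution above the equilibrium $A$ and use comparison) is sound. The gap is in the claim that the supersolution threshold $\tau^*(L)$ tends to $0$ as $L\to\infty$. In your Taylor lower bound
\[
\Xi(\tau)^{(p-1)/p}-a^{2-p}\ge \frac{(p-1)L}{pA^{1/p}}\,\tau^{-\alpha}-C\tau^{-2\alpha},
\]
the constant $C$ is not independent of $L$: the second-order remainder of $z\mapsto z^{(p-1)/p}$ at $A$ with increment $L\tau^{-\alpha}$ scales like $L^2\tau^{-2\alpha}$. Plugging this into the supersolution inequality, the term to be dominated by $\frac{(p-1)L}{(2-p)A^{1/p}}\tau^{-\alpha}$ contains a contribution of order $L^2\tau^{-2\alpha}$, which forces $\tau\gtrsim (cL)^{1/\alpha}$; together with the $O(\tau^{-1})$ source this gives $\tau^*(L)\to\infty$, not $0$. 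Consequently you cannot fix $\tau_0$ and then take $L$ large: for large $L$ the function $\Xi$ ceases to be a supersolution on $[\tau_0,\infty)$, and for small $L$ you have no control on $\tilde\varphi(\tau_0)$ to ensure the initial ordering $\Xi(\tau_0)\ge\tilde\varphi(\tau_0)$.

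What is missing is an \emph{a priori} bound of the form $\tilde\varphi\le M$ on $[\tau_a,\infty)$, i.e.\ $\psi(y)\le M(1-y)^{p/(p-1)}$, obtained \emph{before} attempting the sharp comparison. This is precisely Step~1 of the paper's proof: using that $a\in\mathcal{C}$ gives $\psi(y)\le\tfrac12(1-y)^p$ near $y=1$, the paper compares $\psi$ with $\sigma_\varepsilon(y)=\tfrac12\varepsilon^{-p(2-p)/(p-1)}(1-y)^{p/(p-1)}$ via Lemma~\ref{lemc0} to get the rough bound with some $M>A$. Only then (Step~2) does it build a refined supersolution $(A_0+\tfrac{M-A_0}{\varepsilon}(1-y))(1-y)^{p/(p-1)}$, now with a \emph{fixed} amplitude governed by $M$, whose initial value at $y=1-\varepsilon$ matches the Step~1 bound; letting $A_0\downarrow A$ yields the sharp $\limsup$. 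Your argument can be repaired in the same way: first prove $\tilde\varphi\le M$, then run your supersolution $\Xi=A+L\tau^{-\alpha}$ with $L$ chosen in terms of $M$ (not sent to infinity).
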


\begin{proof}
Let $a\in\mathcal{C}$.

\medskip

\noindent\textbf{Step~1.} We first prove that there exists
$M>a^{p(2-p)/(p-1)}$ such that
\begin{equation}
\psi(y) \le M (1-y)^{p/(p-1)}\ , \qquad y\in [0,1)\ . \label{c22}
\end{equation}
Indeed, let $\varepsilon\in (0,1)$ to be determined later and define
$$
\sigma_\varepsilon(y) := \frac{1}{2 \varepsilon^{p(2-p)/(p-1)}} (1-y)^{p/(p-1)}\ , \qquad y\in (0,1)\ .
$$
Owing to the definition of $\mathcal{C}$, there is
$\bar{\varepsilon}\in (0,1)$ such that $\psi(y)\le (1-y)^p/2$ for
$y\in (1-\bar{\varepsilon},1)$. On the one hand, if $\varepsilon\in
(0,\bar{\varepsilon})$, there holds
$$
\sigma_\varepsilon(1-\varepsilon) = \frac{\varepsilon^p}{2} \ge \psi(1-\varepsilon)\ .
$$
On the other hand, for $y\in (1-\varepsilon,1)$,
\begin{align*}
\sigma_\varepsilon'(y) + \frac{p}{p-1} \sigma_\varepsilon(y)^{(p-1)/p} & = \frac{p}{p-1} (1-y) \left[ \frac{1}{2^{(p-1)/p} \varepsilon^{2-p}} - \frac{(1-y)^{(2-p)/(p-1)}}{2 \varepsilon^{p(2-p)/(p-1)}}  \right] \\
& \ge \frac{p}{p-1} (1-y) \frac{2^{1/p} - 1}{2 \varepsilon^{2-p}} \\
& \ge \frac{p a^{2-p}}{p-1} (1-y) = \psi'(y) + \frac{p}{p-1} \psi(y)^{(p-1)/p}\ ,
\end{align*}
as soon as
\begin{equation}
\frac{2^{1/p} - 1}{2 \varepsilon^{2-p}} \ge a^{2-p}\ . \label{c22a}
\end{equation}
We next choose $\varepsilon\in (0,\bar{\varepsilon})$ satisfying
\eqref{c22a}. This allows us to apply Lemma~\ref{lemc0} with
$(\xi_1,\xi_2) = (\psi,\sigma_\varepsilon)$ in order to obtain that
$\psi(y)\le \sigma_\varepsilon(y)$ for $y\in (0,1-\varepsilon)$.
This inequality extends to the whole interval $(0,1)$, possibly
taking a smaller value of $\varepsilon$.

\medskip

\noindent\textbf{Step~2.} The goal of this step is to improve
\eqref{c22}. To this end, fix $A\in \left( a^{p(2-p)/(p-1)}, M
\right)$ and $\varepsilon \in (0,1)$ such that
\begin{equation}
\varepsilon^{(2-p)/(p-1)} < \frac{A^{(p-1)/p} - a^{2-p}}{2M}\ . \label{c23}
\end{equation}
We define
$$
\tau(y) := \left( A + \frac{M-A}{\varepsilon} (1-y) \right) (1-y)^{p/(p-1)}\ , \qquad y\in (0,1)\ ,
$$
and deduce from \eqref{c22} that
$$
\tau(1-\varepsilon) = M \varepsilon^{p/(p-1)} \ge \psi(1-\varepsilon)\ .
$$
In addition, we infer from \eqref{c4} and \eqref{c23} that, for
$y\in (1-\varepsilon,1)$,
\begin{align*}
& \tau'(y) + \frac{p}{p-1} \tau(y)^{(p-1)/p} \\
& \qquad \ge  \frac{p}{p-1} (1-y) \left[ A^{(p-1)/p} - A (1-y)^{(2-p)/(p-1)} - \frac{2p-1}{p} \frac{M-A}{\varepsilon} (1-y)^{1/(p-1)} \right] \\
& \qquad \ge  \frac{p}{p-1} (1-y) \left[ A^{(p-1)/p} - \left( A + \frac{2p-1}{p} (M-A) \right) \varepsilon^{(2-p)/(p-1)} \right] \\
& \qquad \ge \frac{p}{p-1} (1-y) \left[ A^{(p-1)/p} - 2M \varepsilon^{(2-p)/(p-1)} \right] \\
& \qquad \ge \frac{p a^{2-p}}{p-1} (1-y) = \psi'(y) + \frac{p}{p-1} \psi(y)^{(p-1)/p}\ .
\end{align*}
Applying Lemma~\ref{lemc0} with $(\xi_1,\xi_2)=(\psi,\tau)$ implies that $\psi(y) \le \tau(y)$ for $y\in (1-\varepsilon,1)$. Consequently,
$$
\frac{\psi(y)}{(1-y)^{p/(p-1)}} \le A + \frac{M-A}{\varepsilon}\ (1-y)\ , \qquad y\in (1-\varepsilon,1)\ ,
$$
from which we deduce that
$$
\limsup_{y\to 1} \frac{\psi(y)}{(1-y)^{p/(p-1)}} \le A\ .
$$
As $A$ is arbitrarily chosen in $\left( a^{p(2-p)/(p-1)} , M \right)$, we end up with
$$
\limsup_{y\to 1} \frac{\psi(y)}{(1-y)^{p/(p-1)}} \le a^{p(2-p)/(p-1)}\ .
$$
Since
$$
\liminf_{y\to 1} \frac{\psi(y)}{(1-y)^{p/(p-1)}} \ge a^{p(2-p)/(p-1)}
$$
by \eqref{c8b}, the claimed result follows.
\end{proof}

\section{Proof of Theorem~\ref{thm1}}\label{s4}

We now undo the transformation \eqref{c1} and interpret the outcome
of Section~\ref{s3} in terms of $f(\cdot,a)$. Let $a\in (0,\infty)$.
It follows from \eqref{b1} and \eqref{c2} that
\begin{equation}
f'(r) = - a \psi\left( 1 - \frac{f(r)}{a} \right)^{1/p}\ , \qquad r\in [0,R(a))\ . \label{d0}
\end{equation}
Since $\psi(y) \sim p a^{2-p} y/(p-1)$ as $y\to 0$ and $p>1$, the function $z\mapsto \psi(1-z)^{-1/p}$ defined on $(0,1)$ belongs to $L^1(z_0,1)$ for all $z_0>0$. We may thus integrate \eqref{d0} and find
\begin{equation}
\int_{f(r)/a}^1 \frac{dz}{\psi(1-z)^{1/p}} = r \ , \qquad r\in [0,R(a))\ . \label{d1}
\end{equation}

\medskip

\noindent\textbf{Case~1: $a\in\mathcal{A}$.} According to the definition of $\mathcal{A}$, $\psi(y)$ has a positive limit $\ell(a)>0$ as $y\to 1$ and the function $z\mapsto \psi(1-z)^{-1/p}$ actually belongs to $L^1(0,1)$. We then deduce from \eqref{d1} that
$$
\int_0^1 \frac{dz}{\psi(1-z)^{1/p}} = R(a)\ ,
$$
that is, $R(a)<\infty$. Furthermore, $f'(R(a)) = - a \ell(a)^{1/p}<0$ by \eqref{d0} and the proof of Theorem~\ref{thm1}~(a) is complete.

\medskip

\noindent\textbf{Case~2: $a\in\mathcal{B}$.} By Proposition~\ref{propc6} there holds $a=a_*$ and the definition of $\mathcal{B}$ ensures that $\psi(1-z)^{1/p} \sim z/(p-1)$ as $z\to 0$. Therefore $z\mapsto \psi(1-z)^{-1/p}$ does not belong to $L^1(0,1)$ and we infer from \eqref{d1} that $R(a_*)=\infty$ and
$$
r \sim - (p-1) \log{(f(r))} \;\;\text{ as }\;\; r\to \infty\ .
$$
In particular, there is $R>0$ such that
$$
- \frac{p-1}{r} \log{(f(r))} \ge 1 - \frac{2-p}{2} = \frac{p}{2}\ , \qquad r\ge R\ ,
$$
from which we deduce that
\begin{equation}
\int_R^\infty e^r f(r)\ dr \le \int_R^\infty e^{-(2-p)r/2(p-1)}\ dr < \infty\ , \label{d2}
\end{equation}
since $p\in (1,2)$. Recalling \eqref{b1}, it follows from
\eqref{b1b} after integration that
$$
- e^r |f'(r)|^{p-1} = - \int_0^r e^\sigma f(\sigma)\ d\sigma\ ,
$$
which, together with \eqref{d2}, guarantees that $e^r |f'(r)|^{p-1}$ has a finite limit as $r\to\infty$ and
$$
\lim_{r\to \infty} e^r |f'(r)|^{p-1} = I := \int_0^\infty e^r f(r)\ dr\ .
$$
We then infer from the above property, \eqref{d0}, and the behavior of $\psi(y)$ as $y\to 1$ that
$$
f'(r) \sim - I^{1/(p-1)} e^{-r/(p-1)} \;\;\text{ and }\;\; f'(r) \sim - \frac{f(r)}{p-1} \;\;\text{ as }\;\; r\to \infty\ ,
$$
so that $f(r) \sim (p-1) I^{1/(p-1)} e^{-r/(p-1)}$ as $r\to \infty$. We have thus proved Theorem~\ref{thm1}~(b).

\medskip

\noindent\textbf{Case~3: $a\in\mathcal{C}$.} In that case,
$\psi(1-z)^{1/p} \sim a^{(2-p)/(p-1)} z^{1/(p-1)}$ as $z\to 0$ by
Lemma~\ref{lemc7}. Since $p\in (1,2)$ the function $z\mapsto
\psi(1-z)^{-1/p}$ does not belong to $L^1(0,1)$ and we infer from
\eqref{d1} that $R(a)=\infty$ and
$$
\frac{p-1}{2-p} \left( \frac{f(r)}{a} \right)^{-(2-p)/(p-1)} \sim a^{(2-p)/(p-1)} r \;\;\text{ as }\;\; r\to \infty\ ,
$$
hence Theorem~\ref{thm1}~(c).

\section*{Acknowledgments}

R. G. I. is supported by the Severo Ochoa Excellence project
SEV-2015-0554 (MINECO, Spain). Part of this work has been completed
while R. G. I. was enjoying a one-month ``Invited Professor" stay at
the Institut de Math\'ematiques de Toulouse, and he thanks for the
hospitality and the support.

\bibliographystyle{siam}
\bibliography{CriticalExtinction1d}

\begin{thebibliography}{10}

\bibitem{CQW03}
{\sc X.~Chen, Y.~Qi, and M.~Wang}, {\em Self-similar singular solutions of a
  {$p$}-{L}aplacian evolution equation with absorption}, J. Differential
  Equations, 190 (2003), pp.~1--15.

\bibitem{dPSa02}
{\sc M.~del Pino and M.~S{\'a}ez}, {\em Asymptotic description of vanishing in
  a fast-diffusion equation with absorption}, Differential Integral Equations,
  15 (2002), pp.~1009--1023.

\bibitem{FeVa01}
{\sc R.~Ferreira and J.~L. V{\'a}zquez}, {\em Extinction behaviour for fast
  diffusion equations with absorption}, Nonlinear Anal., 43 (2001),
  pp.~943--985.

\bibitem{IaLa12}
{\sc R.~G. Iagar and {\relax Ph}.~Lauren{\c{c}}ot}, {\em Positivity, decay, and
  extinction for a singular diffusion equation with gradient absorption}, J.
  Funct. Anal., 262 (2012), pp.~3186--3239.

\bibitem{IaLa13a}
\leavevmode\vrule height 2pt depth -1.6pt width 23pt, {\em Eternal solutions to
  a singular diffusion equation with critical gradient absorption},
  Nonlinearity, 26 (2013), pp.~3169--3195.

\bibitem{IaLa13b}
\leavevmode\vrule height 2pt depth -1.6pt width 23pt, {\em Existence and
  uniqueness of very singular solutions for a fast diffusion equation with
  gradient absorption}, J. Lond. Math. Soc. (2), 87 (2013), pp.~509--529.

\bibitem{IaLaxx}
\leavevmode\vrule height 2pt depth -1.6pt width 23pt, {\em Self-similar
  extinction for a diffusive {H}amilton-{J}acobi equation with critical
  absorption}.
\newblock preprint, 2016.

\bibitem{Kw89}
{\sc M.~K. Kwong}, {\em Uniqueness of positive solutions of {$\Delta
  u-u+u^p=0$} in {${\bf R}^n$}}, Arch. Rational Mech. Anal., 105 (1989),
  pp.~243--266.

\bibitem{SeTa00}
{\sc J.~Serrin and M.~Tang}, {\em Uniqueness of ground states for quasilinear
  elliptic equations}, Indiana Univ. Math. J., 49 (2000), pp.~897--923.

\bibitem{Shi04}
{\sc P.~Shi}, {\em Self-similar very singular solution of a {$p$}-{L}aplacian
  equation with gradient absorption: existence and uniqueness}, J. Southeast
  Univ. (English Ed.), 20 (2004), pp.~381--386.

\bibitem{ShWa16}
{\sc N.~Shioji and K.~Watanabe}, {\em Uniqueness and nondegeneracy of positive
  radial solutions of {${\rm div}(\rho\nabla u)+\rho(-gu+hu^p)=0$}}, Calc. Var.
  Partial Differential Equations, 55 (2016), p.~55:32.

\bibitem{Ya91b}
{\sc E.~Yanagida}, {\em Uniqueness of positive radial solutions of {$\Delta
  u+g(r)u+h(r)u^p=0$} in {${\bf R}^n$}}, Arch. Rational Mech. Anal., 115
  (1991), pp.~257--274.

\bibitem{YeYi15}
{\sc H.~Ye and J.~Yin}, {\em Uniqueness of self-similar very singular solution
  for non-{N}ewtonian polytropic filtration equations with gradient
  absorption}, Electron. J. Differential Equations,  (2015), pp.~No. 83, 9.

\end{thebibliography}

\end{document}